 \newcommand{\RR}{\mathbf{R}}  
 \newcommand{\ZZ}{\mathbf{Z}}  
 \newcommand{\BB}{\mathbf{B}}  
 \newcommand{\dist}{\operatorname{dist}}
 \newcommand{\eps}{\epsilon}
    \newtheorem{theorem}    {Theorem}       [section]
    \newtheorem{lemma}      [theorem]       {Lemma}
    \newtheorem{corollary}  [theorem]     {Corollary}
    \newtheorem{proposition}       [theorem]       {Proposition}
    \newtheorem*{claim}{Claim}
    \theoremstyle{definition}
    \newtheorem{definition}  [theorem] {Definition}
    \theoremstyle{definition}
    \newtheorem{remark}   [theorem]       {Remark}
\begin{document}

\setlength{\baselineskip}{1.2\baselineskip}

\def\Ss{\mathcal S}
\def\sh{\sigma _{2h}}
\renewcommand{\thesubsection}{\thetheorem}
\newcommand{\Aup}{A^{\rm upper}}
\title{The geometry of genus-one helicoids}
\date{July 16, 2007.  Revised April 17, 2009}
\begin{abstract}
We prove: a properly embedded, genus-one minimal surface
 that is asymptotic to a helicoid and that contains two straight lines must intersect
 that helicoid precisely in those two lines. In particular, the two lines divide the
 surface into two connected components that lie on either side of the helicoid.  
We prove an analogous result for periodic helicoid-like surfaces.  We also give
 a simple condition guaranteeing that an immersed  minimal surface with finite genus
 and bounded curvature is asymptotic to a helicoid at infinity.
\end{abstract}
\keywords{complete embedded minimal surface, helicoid, variational methods}
\subjclass[2000]{Primary: 53A10; Secondary: 49Q05, 58E12}
\author{David Hoffman and Brian White}
\address{Department of Mathematics \\ Stanford University\\ Stanford,
CA 94305} \address{hoffman@math.stanford.edu,
white@math.stanford.edu}
\thanks{The research of the first author was supported by  the National Science Foundation,
Division of Mathematical Sciences under grant DMS-0139410. The
research of the second author was supported by the National
Science Foundation, Division of Mathematical Sciences
  under grant~DMS-0104049-NCX} 
  \maketitle
\section{Introduction and Statement of Results}

In this paper we consider properly immersed minimal surfaces
${\mathcal S}\subset\RR ^3$ that have one end asymptotic to the
helicoid and  genus equal to one. We will call such a surface a
nonperiodic genus-one helicoid. We are interested in embedded,
nonperiodic genus-one helicoids.  Without loss of generality, we
may assume that ${\mathcal S}$ is asymptotic to a vertical
helicoid whose axis is the $z$-axis, $Z$. If ${\mathcal S}$
contains $Z$ and one horizontal line we will refer to ${\mathcal
S}$ as a {\em symmetric}, nonperiodic genus-one helicoid. Schwarz
reflection about the lines on the surface provides the symmetries.
Without loss of generality we may assume that the horizontal line is the
$x$-axis, $X$, and that  $\Ss$ is asymptotic to the standard helicoid $H$, half of which is
parametrized by
\begin{equation}\label{helicoid}
(r,\theta)\rightarrow (r \cos \theta, r\sin \theta, \theta), \end{equation}$r\in[0,\infty)$, $\theta\in
\RR$. 
(The other half is obtained by Schwartz reflection about $Z$.)  
Note that $X\cup Z\subset H$.

 Hoffman, Weber and Wolf \cite{wwh1} proved the existence of a
symmetric, embedded,   nonperiodic  genus-one helicoid. In
\cite{hoffwhite1} we gave a variational construction for such 
surfaces. The examples we constructed in that paper have the
following property:
\begin{equation}\label{Ansatz}
\begin{aligned}
 &\text{$\Ss \cap H = X \cup Z$, and} \\
 &\text{$\Ss\setminus H$ consists of two congruent, simply connected components.}
\end{aligned}
\end{equation}
 In Theorem~\ref{nonperiodicdecomp} of
Section~\ref{halfHel}, we prove that {\em every embedded,
symmetric, nonperiodic genus-one helicoid satisfies
\eqref{Ansatz}}.

We also establish a parallel result for embedded {\em periodic}
genus-one helicoids, by which we mean properly embedded minimal
surfaces $S\subset \RR ^3$ that are invariant under a screw motion
\begin{equation}\label{screw}
 \sigma_{2h}(r \cos \theta,r\sin \theta,z)= (r\cos (\theta+2h), r\sin (\theta +2h), z+2h)
\end{equation}
 for some $h>0$, and for which $S/\sh$ has genus one
and is
asymptotic to $H/\sh$ at infinity.
Let $S^*=S\cap \{z| -h<z\leq h \}$, and
note that $S^*$ is a fundamental domain for $S$. If $S$ contains
 $Z$ and if $S^*$ contains {\em two} horizontal lines, then
we will refer to $S$ as a {\em symmetric}, periodic genus-one
helicoid. Without loss of generality, we may assume that $X\subset
S^{*}$, which implies that the other horizontal line in $S^{*}$ is $\sigma _h
(X)$. For $h>\pi/2$, such surfaces were proved to exist in \cite{hkw8,wwh1}, and by variational means
in  \cite{hoffwhite1}. Without loss of generality,
we may assume that $\{z=h\}\cap S =\sigma_h(X)$, which implies that 
$S^*$ is bounded by two lines. This follows, for example, from Lemma~1(vii)
of \cite{wwh1} together with an application of the maximum principle.
Define  $X^*=X\cup
\sigma _h (X)$.  The construction in \cite{hoffwhite1} produces
periodic surfaces satisfying an analog of \eqref{Ansatz} above:
\begin{equation}\label{Ansatz2}
\begin{aligned} 
   &\text{$S^*\cap H =X^*\cup Z^*$, and} \\
   &\text{$S^*\setminus  H$ consists of two
                      symmetric, simply connected components,}
\end{aligned}
\end{equation}
where $Z^*=\{(0,0,t)| -h<t\leq h\}$.
In this paper, we prove  that {\em every embedded, symmetric, periodic genus-one
helicoid satisfies \eqref{Ansatz2}.} This is
Theorem~\ref{periodicdecomp}.

We prove in Theorems~\ref{nonperiodicdecomp} and \ref{periodicdecomp} that embedded symmetric genus-one helicoids have simple intersections with all rotations of $H$:

{\em Let $\hat{H}$ be the result of rotating  $H$ about the
$z$-axis, $Z$, through an angle in $(0,\pi)$. If ${\mathcal S}$ is nonperiodic, then ${\mathcal S}\cap
\hat{H}$ consists of $Z$ together with a smooth embedded closed curve that intersects $Z$ twice,
once above and once below the $xy$-plane. If $S$ is periodic,
$S^*\cap \hat{H}$ consists of $Z^*$ together with a smooth embedded
closed curve that  intersects
$Z^*$ twice, once above and once below the $xy$-plane.}

Section~\ref{two}
concludes with a uniqueness result for 
half-helicoids, Theorem~\ref{halfhelicoiduniqueness1}:
 {\em
Suppose $M$ is a connected minimal surface that lies in the closure of a component
of $\RR ^3 \setminus H$, with $\partial M$ lying in the closure of a component, $\Sigma$, of $H\setminus Z$. If $M$ is bounded or if $M$ is asymptotic to  $\Sigma$, then $M\subset \Sigma$.}
The proof of this result uses the fact that  $\RR ^3 \setminus
Z$ is foliated by half-helicoids. Our approach is close to that
taken by Hardt and Rosenberg in \cite{HR1}.

As mentioned above, \cite{wwh1} and \cite{hoffwhite1} proved existence of $\sigma _{2h}$-invariant, symmetric genus-one helicoids for every $h>\pi/2$. 
In Theorem~\ref{nosmallh} of Section~\ref{nonexistence}, we prove
that the condition $h> \pi/2$ is necessary: for $h\le \pi/2$, there are no embedded, symmetric,
periodic genus-$g$ helicoids (with $g\ge 1$) invariant under  the screw
motion $\sh$.  To our knowledge, this  was first observed by Bill
Meeks  for $h<\pi/2$. Our result requires only the presence of two
horizontal lines in $S/\sh$ (no assumption that $S$ contains the
axis $Z$). This
proof uses Proposition~\ref{radialdecay}, which gives estimates of
the radial decay of the vertical distance between the end of a
symmetric, periodic genus-one helicoid and the end of a helicoid.
We use the same estimates to prove 
(Theorem~\ref{LastProposition}) that the $\sigma_{2h}$-invariant, helicoid-like surfaces
constructed in \cite{hoffwhite1} are  asymptotic to helicoids 
and thus are in fact periodic genus-one helicoids.

In Section~\ref{asymptoticbehavior}, we investigate the geometry
of properly immersed minimal surfaces with finite genus and one
end. With a few additional assumptions, we prove that such a
surface is asymptotic to a helicoid:

\vspace{0.1in}

{\em
Let ${\mathcal S}\subset \RR^3$ be a properly immersed minimal
surface with finite genus, one end and bounded Gauss curvature. 
Suppose that ${\mathcal S}$
contains $X\cup Z$, and  that one level set $\{x_3=c\}\cap {\mathcal  S}$  has precisely one divergent component and a finite number of singular points.  Then ${\mathcal S}$ is conformally a compact Riemann surface punctured in one point corresponding to the end, and that end is asymptotic to a helicoid.
}

\vspace{0.1in}

This is Theorem~\ref{onehelicoidalend}. This result gives another proof  that the genus-one surfaces constructed in
\cite{hoffwhite1} are asymptotic to the helicoid. The method of proof here is a slight generalization of the method used in that paper. (See Theorem 6.1 in \cite{hoffwhite1}.)

\section{Structural properties of symmetric genus-one helicoids}\label{two}

An embedded, nonperiodic genus-one helicoid ${\mathcal
S}\subset\RR ^3$ is a properly embedded minimal surface in $R^3$
that is asymptotic to the helicoid at infinity. Without
loss of generality, we will assume that ${\mathcal S}$ is
asymptotic to the helicoid $H$ defined in the first paragraph of
the Introduction. The surface $H$ is a right-handed helicoid that
contains $Z$ and $X$. We say that ${\mathcal S}$ is {\em
symmetric} if it contains $Z$ and $X$.
 Similarly, an embedded,  {\em periodic} genus-one
helicoid is a properly embedded minimal surface  $S\subset \RR ^3$
invariant under a screw motion \eqref{screw}, such that  $S/\sh$
 has genus one and two helicoidal ends.  
 We say that $S$ is {\em symmetric}
if $Z\subset S$, and the fundamental domain $S^*=S\cap\{-h<z\leq
h\}$ contains $X^*=X\cup\sigma_h(X)$.

In this section we will prove that embedded, symmetric genus-one
helicoids are cut by $H$ precisely along $X\cup Z$ into two
congruent simply connected domains.   We also prove the analogous result for  periodic genus-one helicoids.  The technique involves the
study of minimal surfaces with boundary lying in a half-helicoid.
\stepcounter{theorem}
\subsection{Removal of the axes results in two congruent, simply
connected domains}

We begin by showing that removal of $X\cup Z$ from  a properly
embedded, nonperiodic, symmetric genus-one helicoid produces two
congruent, simply connected domains. Similarly, removal of
$X^*\cup Z^*$ from a fundamental domain of a properly embedded,
{\em periodic}, symmetric genus-one helicoid also produces two
congruent simply connected domains.

\begin{lemma}\label{a1} Suppose ${\mathcal S}$ is a properly
embedded, nonperiodic symmetric genus-one helicoid. Then
${\mathcal S}\setminus Z$ and
 ${\mathcal S}\setminus X$ are annuli, and
${\mathcal S}\setminus (X\cup Z)$ is a pair of congruent, simply connected
domains.
\end{lemma}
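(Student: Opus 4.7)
The plan is to compactify $\mathcal{S}$ by filling in its single (helicoidal) end, and then use the Schwarz symmetries together with an Euler characteristic count.

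Since $\mathcal{S}$ is properly embedded with genus one and one end, it is topologically a once-punctured torus; let $\bar{\mathcal{S}}$ denote the closed torus obtained by adjoining a single point $p$ at the end. Both ends of $Z$ (as $z\to\pm\infty$) limit on $p$, and the asymptotic-to-helicoid hypothesis -- combined with the fact that on $H\cong\CC$ the axes $X$ and $Z$ are the real and imaginary coordinate lines, which extend smoothly through the point at infinity to transverse great circles on $S^{2}$ -- shows that $X$ and $Z$ close up to smoothly embedded simple closed curves $\bar X, \bar Z\subset \bar{\mathcal{S}}$ with $\bar X\cap \bar Z=\{0,p\}$ (both intersections transverse).

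The key step is to show that $\bar Z$ is non-separating on $\bar{\mathcal{S}}$. The Schwarz reflection $\sigma_{Z}$ is a real-analytic isometry of $\mathcal{S}$ that fixes $Z$ pointwise and reverses orientation on $\mathcal{S}$ (it flips the normal to $Z$ inside $T\mathcal{S}$), and it extends continuously to an involution of $\bar{\mathcal{S}}$ fixing $\bar Z$. If $\bar Z$ separated $\bar{\mathcal{S}}$ it would be null-homotopic, bounding a closed disk $\bar D$ on one side and a once-holed torus $\bar W$ on the other, and $\sigma_{Z}$ would have to permute $\{\bar D,\bar W\}$. Swapping is impossible because $\bar D\not\cong \bar W$, and preserving each is impossible because $\sigma_{Z}|_{\bar D}$ would then be an orientation-reversing self-homeomorphism of a closed disk that is the identity on $\partial\bar D$, whereas any orientation-reversing self-homeomorphism of a disk must reverse the induced orientation on its boundary. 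Hence $\bar Z$ is non-separating, so $\mathcal{S}\setminus Z=\bar{\mathcal{S}}\setminus \bar Z$ is a connected orientable surface with $\chi=0$ and two ends -- an open annulus. The identical argument with $\sigma_{X}$ handles $\mathcal{S}\setminus X$.

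For the last assertion I would take $\bar X\cup \bar Z$ as the $1$-skeleton of a CW decomposition of $\bar{\mathcal{S}}$: the two intersection points give $V=2$ vertices, and each loop is split into two arcs for $E=4$ edges, so $\chi(\bar{\mathcal{S}})=0$ forces $F=2$ faces, and (being faces of a genuine CW decomposition of the torus) each is an open $2$-disk. Thus $\mathcal{S}\setminus(X\cup Z)$ consists of exactly two simply connected components; by the same disk-involution argument $\sigma_{Z}$ must interchange them, yielding the congruence. The main obstacle is the non-separation step: one must combine the orientation-reversing character of $\sigma_{Z}$ with the impossibility of a boundary-fixing orientation-reversing self-homeomorphism of a disk. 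A routine but necessary preliminary is checking that $X$ and $Z$ actually close up to embedded loops across the puncture, which is precisely where the asymptotic-to-helicoid hypothesis is used.
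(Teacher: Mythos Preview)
Your argument for the first two assertions---that $\mathcal{S}\setminus Z$ and $\mathcal{S}\setminus X$ are annuli---is correct and is essentially the paper's argument. (The paper simply notes that $\rho_Z$ must interchange the two sides of $\bar Z$, since it fixes $\bar Z$ pointwise and is orientation-reversing on $\mathcal{S}$; your extra case, in which $\sigma_Z$ preserves each side, therefore never arises, but there is no harm in dispatching it explicitly.)

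The gap is in the final step. You declare that $\bar X\cup\bar Z$ is the $1$-skeleton of a CW decomposition of $\bar{\mathcal S}$ and then use $V-E+F=\chi(\bar{\mathcal S})=0$ to conclude $F=2$. But the formula $V-E+F=\chi$ is valid only when every complementary region is an open disk, which is exactly what you are trying to prove; the reasoning is circular. The Euler-characteristic computation that is actually available gives $\chi\bigl(\bar{\mathcal S}\setminus(\bar X\cup\bar Z)\bigr)=2$, and since each connected open orientable surface has $\chi\le 1$, this yields only $F\ge 2$. It does not exclude, for instance, $F=3$ with two disks and one open annulus (total $\chi=1+1+0=2$)---a configuration that arises precisely when both arcs of $\bar X$ have their endpoints on the \emph{same} boundary circle of the annulus $\bar{\mathcal S}\setminus\bar Z$, and nothing you have established rules this out. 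The paper closes this gap by invoking the second symmetry $\rho_X$ once more: in the annulus $A=\bar{\mathcal S}\setminus\bar Z$, if the arc $X^+$ had both endpoints on one boundary circle it would separate $A$ into a disk and an annulus, and $\rho_X$ (which fixes $X^+$ pointwise and, being orientation-reversing, swaps its two sides) would have to carry one piece to the other, a contradiction. Hence $X^+$ crosses $A$, producing a single disk, and $X^-$ then cuts that disk in two. Your argument can be repaired along exactly these lines.
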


\begin{lemma} \label{a2} Suppose $S$ is a properly embedded, periodic,
 symmetric genus-one
helicoid invariant under $\sh$.  Let $S^*=\{-h<z\leq h\}$ be a
fundamental domain of $S$, and $Z^*= Z \cap \{ -h < z \le h\}$. 
Then $S^*\setminus Z^*$
and $S^*\setminus X^*$ are annuli, and
 $S^*\setminus (Z^*\cup X^*)$ is a pair of
congruent, simply connected domains.
\end{lemma}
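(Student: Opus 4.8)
The plan is to use the genus-one assumption together with the helicoidal asymptotics to pin down the homology class of the curves $X$, $Z$ (resp. $X^*$, $Z^*$), and then deduce the topological type of the complements by an Euler-characteristic count. First I would establish the underlying conformal model: a properly embedded nonperiodic genus-one helicoid $\mathcal S$ is, by the asymptotics to $H$ and standard finite-topology arguments (curvature estimates near the end, the description of the helicoidal end as a graph over an annular region), conformally a closed genus-one surface $\overline{\mathcal S}$ with one puncture; in particular $\mathcal S$ is an open surface with $H_1(\mathcal S;\ZZ)\cong\ZZ^2$ (two handle generators; the puncture contributes a third loop that is a sum of the handle loops only if $\overline{\mathcal S}$ had positive genus — more precisely $H_1$ of a once-punctured genus-one surface is $\ZZ^2$). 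For the periodic case I would work on the quotient $S/\sh$, which by hypothesis has genus one and two helicoidal ends, hence is conformally a twice-punctured torus with $H_1\cong\ZZ^3$; the fundamental domain $S^*$ is a compact-with-boundary piece and I would translate statements about $S^*$ into statements about $S/\sh$ cut along the two boundary lines.

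\medskip

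Next I would identify the homology classes of the lines. The key point is that $X$ and $Z$ are fixed-point sets of the two order-two Schwarz rotations $\rho_X,\rho_Z$ (rotation by $\pi$ about $X$, resp. $Z$) which are ambient isometries preserving $\mathcal S$. I would argue that $X$ is a nonseparating simple closed curve in $\overline{\mathcal S}$ — if $X$ separated $\mathcal S$, then $\mathcal S\setminus X$ would have two components interchanged or preserved by $\rho_X$; combined with the fact that near the end the surface is asymptotic to $H$ and $X$ runs out to the end along the helicoid, one sees the end is ``split'' by $X$ in a way consistent with $X$ being nonseparating (on the model helicoid, $X$ together with $Z$ already cuts $H$ into two halves, and the genus-one piece does not change this). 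A cleaner route: show $\mathcal S\setminus Z$ is connected and compute its Euler characteristic. Since $Z$ is a properly embedded line meeting the end once, removing it from the once-punctured torus $\mathcal S$ gives a surface with $\chi(\mathcal S\setminus Z)=\chi(\mathcal S)=\chi(T^2)-1=-1$; if it is connected with a certain number of ends one reads off the genus. The honest content is: (i) $\mathcal S\setminus Z$ is connected, and (ii) it has genus zero. For (i) I would use that the asymptotic helicoid minus its axis is connected and a neck-analysis near the compact part; for (ii) the fixed-point-set/Schwarz-reflection structure forces it (a loop around the handle can be chosen disjoint from $Z$ and then must be killed). Granting (i)+(ii), $\mathcal S\setminus Z$ is a genus-zero connected surface of Euler characteristic $-1$, hence has exactly three ends... which is too many — so in fact the correct normalization is that removing $Z$ kills one handle generator, leaving an annulus ($\chi=0$): the subtlety is that $Z$ passes through the puncture, so $\overline{\mathcal S}\setminus Z$ should be computed on the compactification where $Z$ becomes a closed curve through the puncture point, and then $\chi(\overline{\mathcal S})=0$ minus a closed curve is $0$, giving an annulus after re-puncturing. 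I would write this carefully via the compactified surface. The same applies verbatim to $X$.

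\medskip

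Finally, for the intersection statement: having shown $\mathcal S\setminus Z$ and $\mathcal S\setminus X$ are annuli, I claim $X$ and $Z$ meet exactly twice (at the two points of $X\cap Z$, namely the origin's image and the point ``above'' — on $H$, $X\cap Z=\{0\}$, but on $\mathcal S$ there may be two, tracked by the Schwarz symmetries and the placement of $X$ relative to the $xy$-plane). Then $\mathcal S\setminus(X\cup Z)$: start from the annulus $\mathcal S\setminus Z$ and remove the arc(s) of $X$; since $X$ was nonseparating in $\mathcal S$ but in the annulus $\mathcal S\setminus Z$ it becomes one or more properly embedded arcs, each cut either splits off a disk or disconnects — the Euler characteristic bookkeeping ($\chi=0$ for the annulus, removing arcs doesn't change $\chi$ but adds boundary) together with connectedness of each piece under the residual symmetry shows we get exactly two simply connected pieces, swapped by $\rho_X$ (or $\rho_Z$), hence congruent. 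I would nail down ``simply connected'' by noting each piece has $\chi=0$ after puncturing... no: each piece is a planar domain with finitely many boundary arcs and must be a disk by the $\chi$ count $\chi(\mathcal S\setminus(X\cup Z))=\chi(\mathcal S)=-1$ split as $-1 = \chi(D_1)+\chi(D_2)$ with each $\chi(D_i)\le 1$ and each $D_i$ connected, forcing... hmm, this gives $\chi(D_i)\le 0$, not disks — so the right count must use that the arcs removed are two arcs of $X$ (since $X$ meets $Z$ twice, $X\setminus(X\cap Z)$ is two arcs), raising $\chi$ by the number of components of $X\setminus Z$, i.e. $\chi(\mathcal S\setminus(X\cup Z)) = \chi(\mathcal S\setminus Z) + (\text{adjustment})$; done correctly this yields $\chi=1$ per piece. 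The main obstacle, and where I would spend the most care, is exactly this Euler-characteristic/topology accounting — tracking how $X$ and $Z$ (which pass through the end, so are properly embedded \emph{lines}, not closed curves) sit in the compactification, how many times they intersect, and how removing them changes $\chi$ and connectivity — together with rigorously justifying that each complementary piece is connected (which is where the Schwarz-reflection symmetries $\rho_X,\rho_Z$ and the known structure of the helicoidal end do the real work). The periodic case is identical after passing to $S/\sh$ and to the compact fundamental domain $S^*$ bounded by the two lines $X$ and $\sigma_h(X)$, using the extra puncture and the translation $\sigma_h$ in place of one of the reflections.
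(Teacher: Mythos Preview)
Your strategy---topology of the genus-one surface plus the Schwarz symmetries $\rho_X,\rho_Z$---is the right one, and is essentially what the paper does. But your execution has a real gap: the Euler-characteristic bookkeeping is wrong and never converges. You write $\chi(\mathcal S\setminus Z)=\chi(\mathcal S)=-1$, but this is incorrect. Since $Z$ passes through the end, $\mathcal S\setminus Z$ equals $T\setminus(Z\cup\{\infty\})$ where $T=\mathcal S\cup\{\infty\}$ is the torus compactification and $Z\cup\{\infty\}$ is a simple \emph{closed} curve; hence $\chi(\mathcal S\setminus Z)=\chi(T)-\chi(S^1)=0$, not $-1$. You half-notice this (``the subtlety is that $Z$ passes through the puncture\dots'') but then abandon the computation rather than redo it. The subsequent count for $\mathcal S\setminus(X\cup Z)$ inherits the error and likewise never closes; your own text oscillates between several incompatible answers. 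You also seem uncertain how many times $X$ and $Z$ meet: on $\mathcal S$ itself $X\cap Z=\{0\}$, a single point; the second intersection appears only on $T$, at $\infty$.

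The paper avoids all of this bookkeeping by using, at each step, the elementary dichotomy for cutting a torus or annulus along a simple closed curve or arc, and ruling out the wrong branch by symmetry. Concretely: $T\setminus(Z\cup\{\infty\})$ is either an annulus or (disk)$\,\sqcup\,$(once-punctured torus); since $\rho_Z$ swaps the two sides, the pieces are homeomorphic, which excludes the second option. Then in the annulus $A=T\setminus(Z\cup\{\infty\})$, the positive $x$-ray is an arc joining two boundary points; if both lay on the same boundary circle, the cut would give (disk)$\,\sqcup\,$(annulus), again excluded by $\rho_X$. So the arc crosses $A$, cutting it into a disk, and the negative $x$-ray then cuts that disk into two congruent disks. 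No $\chi$-chasing is needed. For the periodic lemma one runs the identical argument on the one-point compactification of $S\cap\{-h<z<h\}$. If you want to salvage your approach, the fix is simply to work on $T$ from the start, treat $X\cup\{\infty\}$ and $Z\cup\{\infty\}$ as simple closed curves meeting transversally in two points, and then either run the dichotomy argument above or (equivalently) observe that two simple closed curves on a torus with algebraic intersection number $\pm 1$ form a standard meridian--longitude pair.
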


\begin{proof}[Proof of  Lemma~\ref{a1}]
The surface ${\mathcal S}$ is topologically a once-punctured torus.  (In fact by Theorem~\ref{onehelicoidalend},  ${\mathcal S}$
is conformally a  once-punctured torus, but we will not use that
here.)  Thus the one-point compactification $T={\mathcal S}\cup\{\infty\}$ is a torus
and $Z\cup\{\infty\}$ is a simple closed curve in $T$.  Removing a simple closed
curve from a torus either separates it into a disk and a
once-punctured torus, or else results in a single annulus.

The rotation $\rho _Z$ is an isometry of ${\mathcal S}$ that
leaves $Z$ invariant. Therefore, $Z$ cannot divide $T$ into a disk
and punctured torus, because these pieces would have to be
homeomorphic (by the involution $\rho _Z$). Thus the result is a
single annulus $A$.

For similar reasons, $X$,  viewed as a curve in $T$, is a simple
closed curve. The same argument shows that removal of $X$ from $T$
produces an annulus.

Note that $Z$ and $X$ cross at the origin and at the point at
infinity.  These two points are represented as points on the
boundary of the annulus $A=T\setminus Z$. The positive ray of $X$
is a simple curve in $A$ going from one boundary point to another.
If it went from one boundary component of $A$ to the same boundary
component, it would divide $A$ into two components, one a disk,
the other an annulus. But if there were two components, they would
be homeomorphic (by $\rho _X$). Thus, the positive ray of $X$ goes
from one boundary component of $A$ to the other, and removing it
results in a disk. Now removing the negative ray of $X$ divides
that disk into two disks.  The two disks are congruent since they are related by the isometry $\rho _X$
\end{proof}

\begin{proof}[Proof of Lemma~\ref{a2}]
Note that the one-point compactification of $S\cap \{ -h < z < h\}$ is a torus $T$.
The proof of Lemma~\ref{a2}
is exactly the same as the proof of Lemma~\ref{a1}, except that one replaces $\Ss$ and $Z$
by $S\cap \{ -h < z < h\}$ and $Z\cap \{-h < z < h\}$.
\end{proof}

 \stepcounter{theorem}\subsection{The decomposition
theorem for nonperiodic symmetric genus-one
helicoids}\label{halfHel}

 \vspace{0.2in}
 The helicoid $H$ divides $\RR ^3$ into
two simply connected regions.  Let $H^+$  be the region that
contains $Y^+$, the positive ray of the $y$-axis, and let  $H^-$
be the other region. The axis $Z$ is contained in the helicoid
$H$, and
 $H\setminus Z$ consists of two simply connected components, each of which
 is we will refer to as a {\it  half helicoid}.  We will denote by $\Sigma _0$ the
 half-helicoid that contains the $X^+$, the positive $x$-axis.

More generally, let us extend the definition of  a   half-helicoid
to include any surface obtained by rotating one of the components
of $H\setminus Z$ through some angle about $Z$.  Thus the
half-helicoids form a foliation of $\RR^3\setminus Z$. In particular,  rotating $\Sigma_0$
through angles in $(0,\pi)$ produces a foliation of $H^+$.

\begin{theorem}\label{nonperiodicdecomp}
Let $\mathcal{S}$ be an embedded,  nonperiodic, symmetric
genus-one helicoid. Then
\begin{enumerate}[\upshape 1.]
 \item $\mathcal{S}\setminus (X\cup Z)$ consists of two simply connected, congruent components
  $D$ and $D'$.
 \item\label{DisjointFromH}  $D$ and $D'$ lie in $\RR^3\setminus H$, one in $H^+$, the
 other in $H^-$.
 \item\label{IntersectionIsACurve} Let $\hat{H}$ be a helicoid obtained by rotating $H$ about $Z$
 through an angle in $(0,\pi)$.  Then $D\cap \hat{H}$ (resp. $D'\cap \hat{H}$) is a smooth embedded curve with one endpoint
 in $Z^+$ and the other endpoint in $Z^-$.
\end{enumerate}
\end{theorem}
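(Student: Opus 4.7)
My plan is as follows. Part 1 is exactly the content of Lemma~\ref{a1}. For parts 2 and 3 the central tool is the foliation of $\RR^3\setminus Z$ by the half-helicoids $\Sigma_\alpha$ ($\alpha\in\RR/2\pi\ZZ$), obtained by rotating $\Sigma_0$ about $Z$ through angle $\alpha$; in this foliation $H\setminus Z=\Sigma_0\sqcup\Sigma_\pi$ and $H^+=\bigsqcup_{\alpha\in(0,\pi)}\Sigma_\alpha$. Since the open disk $D$ is simply connected and disjoint from $Z$, the leaf parameter lifts to a continuous function $\tilde\alpha:D\to\RR$ with $p\in\Sigma_{\tilde\alpha(p)\,\bmod\,2\pi}$, which I normalize by $\tilde\alpha\equiv 0$ on $X^+\subset\partial D$; continuous extension around $\partial D$ then forces $\tilde\alpha\equiv\pi$ on $X^-\subset\partial D$.

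Part 2 reduces to the claim $\tilde\alpha(\mathrm{int}\,D)\subset(0,\pi)$, since this yields $D\subset H^+$ (so $D\cap H=\emptyset$); the analogous statement for $D'$ then follows because $\rho_X$ swaps $D\leftrightarrow D'$ and $H^+\leftrightarrow H^-$. The mechanism is the strong maximum principle (SMP) for minimal surfaces combined with unique continuation: if $\tilde\alpha$ attained some value outside $(0,\pi)$ at an interior point, then pushing to a supremum or infimum of $\tilde\alpha$ over $\bar D$ would give an interior extremum at which $\Ss$ is tangent to the corresponding leaf $\Sigma_{\alpha_*}$ from one side, and SMP applied to $\Ss$ and the Schwarz-reflected complete helicoid $\hat H_{\alpha_*}=\Sigma_{\alpha_*}\cup Z\cup \Sigma_{\alpha_*+\pi}$ would force $\Ss=\hat H_{\alpha_*}$, contradicting the genus-one hypothesis.

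The principal obstacle is ensuring the supremum and infimum of $\tilde\alpha$ really are attained at interior points of $D$ rather than escaping to $\partial D$ or to infinity. Tracking the continuous lift at the origin and $\infty$ vertices of $\partial D$ (each visited twice by the quadrilateral boundary loop, corresponding to the two opposite quadrants of $D$ at each vertex of the tangent plane $xz$-plane of $\Ss$) shows $\tilde\alpha$ on the $Z^+$-arc of $\partial D$ runs from $0$ at the $\infty$-endpoint to $\pi$ at the origin-endpoint, and on $Z^-$ from $\pi$ to $0$. A boundary extremum of $\tilde\alpha|_Z$ outside $[0,\pi]$ would produce a tangent point of $\Ss$ with some $\hat H_{\alpha_*}$ along $Z$; the same SMP/unique-continuation argument (Schwarz-reflecting $\Sigma_{\alpha_*}$ across $Z$) forces $\Ss=\hat H_{\alpha_*}$, a contradiction. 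This keeps $\tilde\alpha|_Z$ monotone on each of $Z^\pm$ and inside $[0,\pi]$ on all of $\partial D$. Asymptotic control as $|z|\to\infty$ uses that $\Ss$ is asymptotic to $H$, so the tangent plane of $\Ss$ at $(0,0,z)$ has polar angle converging to $z\bmod\pi$; this pins down the correct integer multiples of $\pi$ in the lift. I expect this asymptotic and topological bookkeeping to be the delicate step.

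Part 3 follows from parts 1 and 2. For $\beta\in(0,\pi)$, the intersection $\hat H_\beta\cap H^+=\Sigma_\beta$ is a single leaf, so $D\cap\hat H_\beta=\{p\in D:\tilde\alpha(p)=\beta\}$. Closed loops in this level set are ruled out: such a loop would bound a disk $W\subset D$ on which the SMP forces $\tilde\alpha\equiv\beta$, putting $W\subset\Sigma_\beta$ and hence forcing $\Ss=\hat H_\beta$ by unique continuation, a contradiction. At a saddle-type tangent crossing (where the level set has four branches meeting at a point), the absence of closed loops forces all four branches to reach $\partial D$, contradicting the boundary-endpoint count of two (one on $Z^+$ and one on $Z^-$, by monotonicity of $\tilde\alpha|_Z$). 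Therefore the level set is a smooth $1$-manifold with exactly two boundary endpoints, hence a single smooth embedded arc joining the two $Z$-endpoints.
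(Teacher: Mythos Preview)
Your overall strategy---using the half-helicoid foliation and the lifted leaf parameter $\tilde\alpha$---is exactly the paper's strategy (the paper calls this function $F=\theta-z$), and Part~1 together with the maximum-principle mechanism for Parts~2 and~3 are correctly identified. However, there is a genuine gap at the step you yourself flag as ``delicate'': determining that $\tilde\alpha\equiv\pi$ on $X^-$ (equivalently, in the paper's notation, that the integer $k$ in $\theta|_{X^-}=(2k+1)\pi$ is zero).

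Your justification ``continuous extension around $\partial D$ then forces $\tilde\alpha\equiv\pi$ on $X^-$'' does not work: to extend continuously around $\partial D$ you must pass through the $Z$-arcs, and the values of $\tilde\alpha$ along $Z$ are not known a~priori. Your subsequent corner-and-asymptotic argument only yields information modulo $2\pi$: the tangent plane at the origin being the $xz$-plane, and the asymptotic convergence of tangent planes along $Z$ to those of $H$, tell you $\tilde\alpha\to 0\pmod{2\pi}$ at the corners and at infinity---but not which integer multiple the lift selects. Finally, your claim that the SMP argument makes $\tilde\alpha|_Z$ monotone is not what that argument proves: the boundary version of SMP (the paper's Claim~2) only excludes local extrema of $\tilde\alpha$ on $\overline D$ occurring at points of $Z$; it does not exclude local extrema of the restriction $\tilde\alpha|_Z$, so monotonicity along $Z$ does not follow.

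The paper fills this gap with an ingredient you omit entirely: the $\rho_Y$ symmetry. Since $\rho_Y=\rho_X\circ\rho_Z$ is an orientation-preserving isometric involution of the disk $D$, it has exactly one fixed point, necessarily on $Y$; after relabeling $D\leftrightarrow D'$ one may assume it lies on $Y^+$, so $D\cap Y^-=\emptyset$. Combining this with the level-set structure (your Part~3 reasoning, which the paper proves first as Claim~3), one sees that $D$ misses every half-helicoid through $Y^-$, i.e.\ $\tilde\alpha$ never takes a value $\equiv 3\pi/2\pmod{2\pi}$. Since $\tilde\alpha(D)$ is a connected interval containing $0$ and $(2k+1)\pi$, this immediately forces $k=0$ and confines $\tilde\alpha(D)\subset(-\pi/2,3\pi/2)$; a short asymptotic argument (Claim~5) then sharpens this to $(0,\pi)$. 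Without the $\rho_Y$ fixed-point step, nothing in your outline bounds the winding of $\tilde\alpha$ along the $Z$-arcs, and the integer $k$ remains undetermined.
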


Here $Z^+$ and $Z^-$ are the components of $Z\setminus\{O\}$ where $z>0$ and $z<0$,
respectively.

\begin{proof} Statement~1 is Lemma~\ref{a1}.
It remains to prove statements 2 and 3. We begin by observing that  by
assumption ${\mathcal S}$ contains  the axes $X$ and $Z$ and  
$\partial D=\partial D'=X\cup Z$. Let
$\rho _X$ and $\rho _Y$ denote rotations by $\pi$ about $X$ and
$Y$, respectively. These symmetries are orientation-reversing on
${\mathcal S}$, and it is easy to see that they interchange $D$
and $D'$. It follows that their composition, $\rho_Y= \rho
_X\circ\rho _Z$, rotation by $\pi$ about the axis $Y$, leaves $D$
and $D'$ invariant and preserves orientation on $\Ss$.

If $p\in \RR^3\setminus Z$, let $\Sigma(p)$ be the half-helicoid
that contains $p$. If $p\in Z\setminus \{0\}$, let $\Sigma(p)$ be
the half-helicoid with the property  that $\Sigma(p)$ and $D$ have
the same tangent half-plane at $p$.  Note that
\begin{equation}
 \text{\em If $p\in Z\setminus\{0\}$, then $D\cap \Sigma(p)$ contains a smooth
   curve, one of whose  endpoints is $p$.}
\end{equation}

Since $\overline{D}\setminus Z$ is simply connected, there is a
smooth function
\[
   \theta: \overline{D}\setminus Z \to \RR \
\]
such that
\begin{equation}\label{e:DefineTheta}
   q= (x,y,z) = (r\cos\theta, r\sin\theta, z)
\end{equation}
for $q\in \overline{D}\setminus Z$, where
$r=r(x,y,z)=\sqrt{x^2+y^2}$. We may normalize $\theta$ so that 
\begin{equation}\label{X+}
  \theta=0 \mbox{\,\, on\,\, }X^+,
\end{equation}
where $X^+$ is the positive $x$-axis.
It follows that for some integer $k$,
\begin{equation}\label{X-}
 \theta =(2k+1)\pi \mbox{\,\, on\,\, }X^-,
\end{equation} 
where $X^-=\rho_Y(X^+)$, the negative $x$-axis. 
Since $\overline{D}\setminus \{0\}$ is a smooth manifold with
boundary $X\cup Z\setminus \{0\}$, the function $\theta$ extends
smoothly to $Z\setminus\{0\}$.  Thus $\theta$ is a smooth function
on $\overline{D}\setminus \{0\}$.

Since $\theta=0$ on $X^+$ and $\theta=(2k+1)\pi$ on $X^-$,
extending $\theta$ to $0$ is somewhat problematic. We get around
that by using the geodesic completion $D^*$ of
$\overline{D}\setminus\{0\}$. Note that $D^*$ is
$\overline{D}\setminus\{0\}$ together with two points, $0^+$ and
$0^-$, which are the limits of $(x,0,0)$ as $x\downarrow 0$ and
$x\uparrow 0$, respectively. We let
\begin{equation}\label{thetalimits}
\theta(0^+)=0 \mbox{\,\,\,and\,\,\,} \theta(0^-)=(2k+1)\pi,
\end{equation}
which makes $\theta$ continuous on all of $D^*$.

We now use the $\rho _Y$ symmetry of $D$. Since 
$\rho _Y (x,y,z)
= (-x,y,-z)$, 
$$
\cos (\theta \circ\rho _Y)=-\cos \theta \,\,\,\mbox{and}\,\,\,
\sin (\theta \circ\rho _Y)=\sin \theta,
$$ from
which it follows that $\theta\circ\rho _Y$ and $-\theta$
differ by an odd multiple of $\pi$. In fact, since $\rho
_Y(X^+)=X^-$, it follows from \eqref{X+} and~\eqref{X-} that
\begin{equation}\label{theta1}
\theta \circ\rho _Y =-\theta +(2k+1)\pi.
\end{equation}

 Since $D$ is a disk and $\rho  _Y$ is an orientation-preserving isometric involution of
 D, $\rho _Y$ has a unique fixed point.
 That is,
 $D\cap Y$ is a single point.
  Without loss of generality, we can assume that this fixed point is in $H^{+}$.
 (Since $D'=\rho_Z(D)$, we can simply relabel the disks.)
 In particular, 
\begin{equation}\label{e:Y^-}
    D\cap Y^-=\emptyset,
\end{equation}
where $Y^-$ is the negative $y$-axis.
 \vspace{0.1in}
 Let
\begin{equation}\label{e:DefineF}
\begin{aligned}
  &F: D \to \RR \\
  &F: (x,y,z) \mapsto  \theta(x,y,z)-z.
\end{aligned}
\end{equation}
The function $F$ is connected to the geometry of our situation by
the following elementary observation:
\begin{equation} \label{whyF}
\text{ $F$ is constant on every half-helicoid.}
\end{equation}
Thus statements~\ref{DisjointFromH} and~\ref{IntersectionIsACurve} become
statements about the function $F$.  In particular, we will prove statement~\ref{DisjointFromH}
by proving that $F(D)=(0,\pi)$.

 Claim 1: $F$ has no local maxima or local minima on $D$.

\begin{proof}[Proof of Claim 1] Suppose $F$ has a local maximum or minimum at $p\in
D$. Let $\BB$ be a  ball centered at
 $p$,  small enough so that $\BB$ is disjoint from $Z$ and  that $D\cap \BB$ is connected.
 Then there is a unique continuous extension of $\theta$ to $D\cup\BB$ so that
    \eqref{e:DefineTheta} still holds.
Note that if we use this extended $\theta$ together with
\eqref{e:DefineF} to define $F$ on $\BB$,  then it follows from
\eqref{whyF} that
 \[
     \{ q\in \BB: F(q)=F(p)\}  = \Sigma(p)\cap \BB.
 \]
Thus $D\cap \BB$ lies in the closure of one the connected
components of $\BB\setminus \Sigma(p)$, and $D\cap \BB$ and
$\Sigma(p)\cap\BB$ are tangent at $p$.  By the maximum principle,
$D\cap \BB$ and $\Sigma(p)\cap \BB$ coincide.  By analyticity, all
of $D$ is contained in a helicoid, a contradiction.
\end{proof}

Claim 2: $F$ has no local maxima or local minima on $Z\setminus
\{0\}$.
\begin{proof}[Proof of Claim 2]
 Suppose $p\in Z\setminus\{0\}$.   Since $\Sigma(p)$ and
$\overline{D}\setminus\{0\}$ are minimal surfaces with boundary and since
they are tangent at $p$,  $D\cap \Sigma(p)$ contains a smooth
curve $C$ with $p$ as one of its endpoints.   Note that $F$ is
constant along $C$ by \eqref{whyF}.   By Claim~1, none of the points of $C$ is a
local maximum or local minimum of $F$.  Thus $p$ is neither a
local maximum nor a local minimum of $F$.
\end{proof}

Claim 3: Suppose $\alpha$ is not an integral multiple of $\pi$.
Then either $F^{-1}(\alpha)$ is empty, or it is a single smooth
curve with one endpoint on $Z^+$ and the other endpoint on $Z^-$.

\begin{proof}[Proof of Claim 3] Suppose $p\in C=F^{-1}(\alpha)$.
Then  $C\subset \Sigma(p)$.  Note that $D$ is asymptotic to $H$ at
infinity and $\Sigma(p)$ is not (because $\alpha$ is not an
integral multiple of $\pi$), so $C$ lies in a bounded region of
$\RR^3$.  Now $C$ cannot contain a closed curve, because then that
curve would bound a region in $D$, and $F$ would have an interior
maximum or minimum in that region, violating Claim~1. Thus each
connected component $T$ of $C$ has the structure of a tree whose
endpoints are on $X\cap Z$.  Since $F=0$ on $X^+$ and $F=(2k+1)\pi$
on $X^-$, the endpoints must be on $Z$.  Since $0$ and $(2k+1)\pi$
are the only subsequential limits of $F(p)$ as $p\to 0$, in fact
the endpoints of $T$ must be on $Z\setminus\{0\}$.

Now $T$ cannot have two endpoints on $Z^+$. For if it did, $T$
would contain a curve $\Gamma$ joining those endpoints, and that
curve together with the interval $I\subset Z$ joining the
endpoints would bound a region $U$ in $D$.   Since $F$ is constant
on $\Gamma$, $F$ would have a local maximum or a local minimum at
some point $p\in U\cup I$.  But that is impossible by Claims~1 and
2.

Thus $T$ has at most one endpoint on $Z^+$ and (by the same
reasoning) at most one endpoint on $Z^-$.  It follows that $T$ is
a smooth curve joining a point on $Z^+$ to a point on $Z^-$.

We have shown: $C=F^{-1}(\alpha)$ is a union of disjoint curves,
each of which joins a point in $Z^+$ to a point in $Z^-$.
Furthermore, there cannot be more than one such curve.  For if
there were two such curves $T_1$ and $T_2$, then those curves
together with a pair of intervals $I^+$ and $I^-$ (in $Z^+$ and
$Z^-$, respectively) would bound a region $U$.  Since $F$ is not
constant on $\bar{U}$, its maximum on $\bar{U}$ is greater than $\alpha$ or
its minimum on $\bar{U}$ is less than $\alpha$.  Thus $F$ has a local
maximum or a local minimum on $U\cup I^+\cup I^-$.  But that is
impossible by Claims~1 and 2.
\end{proof}

Claim 4: If $p\in Y^-$, then $D\cap \Sigma(p)=\emptyset$.

\begin{proof}[Proof of Claim 4] Note that $D\cap \Sigma(p)$ is the union of
$F^{-1}(\alpha)$ over all $\alpha$ that are congruent to $3\pi/2$
mod $2\pi$.  Thus if $D\cap \Sigma(p)$ were nonempty,  by Claim~3
it would contains a curve joining $Z^+$ to $Z^-$.  But any such
curve in $\Sigma(p)$ must cross $Y^-$, and $D$ does not contain
any points in $Y^-$.  Thus $D\cap \Sigma(p)=\emptyset$.
\end{proof}

By Claim 4, the set $F(D)$ does not contain any
  values equal to  $3\pi/2$ mod $2\pi$.   In
particular, it contains neither $-\pi /2$ nor $3\pi/2$. Since $D$
is connected, $F(D)$ is an interval. Since $F|D$ has no local
maxima or local minima, $F(D)$ is an open interval $(a,b)$.  From
\eqref{thetalimits},   $[a,b]$ contains $0$ and $(2k+1)\pi$. Thus
\begin{equation}\label{FD}
        \text{$F(D) = (a,b)$, where $-\pi/2\le  a  \le 0$ and $\pi \le b \le 3\pi/2$,}
\end{equation}
 and $k=0$.  In particular, from
\eqref{theta1} we have
\begin{equation}\label{theta2}
\theta \circ\rho _Y=-\theta +\pi.
\end{equation}

Claim 5: $a=0$ and $b=\pi$.

\begin{proof}[Proof of Claim 5] If $b\ne \pi$, then by \eqref{FD}, $\pi<b\le 3\pi/2$.    Let
$b_i\in (\pi/2,b)$ with $b_i\to b$.  Then
  $F^{-1}(b_i)$ is nonempty, and by Claim~4, it must contain
   a point $p_i$ in the $xy$-plane.  Note that
$p_i$ has the form
\[
   p_i = (r_i\cos b_i, r_i\sin b_i, 0)
\]
where $r_i>0$. By passing to a subsequence, we may assume that the
$r_i$ converge to a limit  $r\in [0,\infty]$.   Now $r$ cannot be
$0$ since $\theta(0^+)=0$ and $\theta(0^-)=\pi$.  Also, $r$ cannot
be a finite nonzero number since otherwise $F$ would attain its
maximum, contradicting Claim~1.

Finally, if $r_i\to\infty$, then $\dist(p_i, H) \to b-\pi>0$,
which is impossible since $D$ is asymptotic to $H$ at infinity.
The contradiction proves that $b=\pi$. 
The proof that $a=0$ is essentially the same. (It also follows from the $\rho _Y$ symmetry of $D$.)

\end{proof}
Since $F(D) = (0,\pi)$, it follows that $D$ intersects only those  
half-helicoids
produced by rotating $\Sigma_0$ through an angle in $(0,\pi)$. As  
observed in the paragraph just before the
statement of the theorem,
those half-helicoids foliate $H^+$, so $D$ lies in $H^+$.  It follows  
that
$D'=\rho_YD$ lies in $H^-=\rho_YH^+$.  This completes the proof of  
statement~\ref{DisjointFromH} of the theorem.

Statement~\ref{IntersectionIsACurve} of the theorem follows from Claim~3, together with
the fact that $F(D)=(0,\pi)$.

\end{proof}

\stepcounter{theorem}
\subsection{The decomposition theorem for periodic symmetric genus-one helicoids}
\label{periodic} There is an analogous result to
Theorem~\ref{nonperiodicdecomp} for periodic, symmetric genus-one
helicoids.
\begin{theorem}\label{periodicdecomp}
Let $S$ be a periodic, embedded, symmetric genus-one helicoid.
Then
\begin{enumerate}
 \item [1.]$S^*\setminus (X^*\cup Z^*)$ consists of two simply connected components $D$ and $D'$.
 \item [2.] $D$ and $D'$ lie in $\RR^3\setminus H$, one in $H^+$ the
 other in $H^-$.
 \item [3.] Let $\hat{H}$ be a helicoid obtained by rotating $H$ about $Z$
 through an angle in $(0,\pi)$.  Then $D\cap H'$ (resp.
 $D'\cap\hat{H}$)
   is a smooth embedded curve with one endpoint
 in $Z^+$ and the other endpoint in $Z^-$.
\end{enumerate}
\end{theorem}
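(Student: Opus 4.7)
The plan is to follow the proof of Theorem~\ref{nonperiodicdecomp} essentially verbatim, working inside the fundamental domain $S^*$ with $Z^*$ in place of $Z$ and $X^* = X \cup \sigma_h(X)$ in place of $X$. Statement~(1) is exactly Lemma~\ref{a2}, so I focus on (2) and (3). The extra line $\sigma_h(X)$ introduces additional boundary bookkeeping but, as I will explain, does not disrupt the overall structure.

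First, the Schwarz reflections $\rho_X$, $\rho_Z$ (and also $\rho_{\sigma_h(X)}$) are isometries of $S$; $\rho_X$ and $\rho_Z$ each interchange $D$ and $D'$, so their composition $\rho_Y = \rho_X \rho_Z$ is an orientation-preserving involution of each. Hence $\rho_Y|_D$ has a unique fixed point, which lies on $Y$. After relabeling $D \leftrightarrow D'$ if necessary, this fixed point lies in $Y^+$, so $D \cap Y^- = \emptyset$.

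Next, I would define $\theta$ on $\overline{D} \setminus Z^*$ via $q = (r\cos\theta, r\sin\theta, z)$, normalized by $\theta = 0$ on $X^+$. Tracing boundary values, $\theta = (2k+1)\pi$ on $X^-$, $\theta = h + 2\pi m$ on $\sigma_h(X)^+$, and $\theta = (2m+1)\pi + h$ on $\sigma_h(X)^-$ for some integers $k, m$; it extends smoothly to $Z^* \setminus \{O,(0,0,h)\}$. Set $F = \theta - z$; as before, $F$ is constant on each half-helicoid. Claims~1 and~2 of Theorem~\ref{nonperiodicdecomp} then transfer verbatim: $F$ has no local extrema on $D$, nor on the two components of $Z^* \setminus \{O,(0,0,h)\}$.

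The analogue of Claim~3 is the substantive step. For $\alpha$ not attained on $X^\pm \cup \sigma_h(X)^\pm$, I need $F^{-1}(\alpha)$ to be bounded in $S^*$; the key ingredient is that $D$ is asymptotic at infinity to the single helicoid sheet $H \cap \{-h<z\leq h\}$, on which $F\equiv 0$, so any level set at nonzero $\alpha$ cannot escape to infinity. Combined with the no-extrema claims, each level set is a single smooth curve joining $Z^{*+} = Z^* \cap \{z>0\}$ to $Z^{*-} = Z^* \cap \{z<0\}$. Claim~4 transfers unchanged using $D\cap Y^- = \emptyset$, and Claim~5 pins down $k=m=0$ and $F(D)=(0,\pi)$ by the same asymptotic contradiction (a point $p_i$ in the $xy$-plane on the level curve with $r_i\to\infty$ would have $\dist(p_i,H)$ bounded away from zero). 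The conclusions $D\subset H^+$ (and hence $D' = \rho_Z D \subset H^-$) and statement~(3) then follow exactly as in the nonperiodic case. The main obstacle I anticipate is the boundedness step, specifically verifying that at infinity $D$ approaches only a single sheet of the helicoid strip so that $F$ has a unique asymptotic value; once that is in hand, the extra integer $m$ is pinned down routinely.
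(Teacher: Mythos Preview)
Your proposal is correct and takes essentially the same approach as the paper, which simply states that the proof of Theorem~\ref{periodicdecomp} ``is a straightforward adaptation of the proof of Theorem~\ref{nonperiodicdecomp}'' (with Statement~1 already handled by Lemma~\ref{a2}); you have outlined that adaptation accurately, correctly flagging the extra corner point $(0,0,h)$ and the additional boundary rays $\sigma_h(X)^{\pm}$.

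One small correction: your anticipated ``main obstacle'' is not actually an obstacle. You do not need $D$ to be asymptotic to a \emph{single} sheet on which $F\equiv 0$. At infinity $D$ may well approach both half-helicoid sheets of $H\cap\{-h<z<h\}$, on which $F$ takes the values $0$ and $\pi$ respectively; likewise, on the approach to $\sigma_{\pm h}(X)$ at bounded $r$, the limiting values of $F$ are again integer multiples of $\pi$. That is all Claim~3 needs: for $\alpha\notin\pi\ZZ$, the level set $F^{-1}(\alpha)$ cannot escape, and its endpoints cannot lie on $X^\pm$ or on (the approach to) $\sigma_{\pm h}(X)^\pm$, so they lie on $Z^*\setminus\{O,(0,0,h)\}$. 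Claim~5 then pins down the integers exactly as in the nonperiodic case, since the level curve crosses $\{z=0\}$ and the distance-to-$H$ argument goes through verbatim.
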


The sets $S^*$ and $X^*$ are defined in the Introduction,
 just before \eqref{Ansatz2}.
The proof of Theorem 2.6 is a straightforward adaptation of the  
proof of Theorem~\ref{nonperiodicdecomp}.
(Statement 1 was already proved in Lemma 2.2.)

\stepcounter{theorem}
\subsection{A Half-helicoid uniqueness
theorem}\label{halfhelicoidsection}

We end this section with a uniqueness theorem for minimal
surfaces that lie in the closure of a component of $\RR ^3
\setminus H$, and that are either compact or asymptotic to $H$ at infinity.
\begin{definition}\label{asymptoticdef}
Let $S$ be an unbounded, oriented, embedded surface in $\RR^3$.
We say that another surface $M$ is {\it asymptotic to $S$ at infinity} provided
there is a domain $\Omega\subset S$ and a function 
$
    u: \Omega \to \RR
$
such that
\begin{equation}\label{condition}
    \lim_{|p|\to \infty} (|u(p)| + |Du(p)|) = 0
\end{equation}
and such that outside of a compact subset of $\RR^3$, the surface $M$ coincides with the graph
\[
    \{ p + u(p)\nu(p):  p\in \Omega \},
\]
where $\nu(p)$ is the unit normal to $S$ at $p$.
\end{definition}
\begin{remark}\label{asymptoticremark}
If $M$ and $S$ have compact boundaries and bounded principal  
curvatures, then the
$C^1$ condition \eqref{condition} follows (by an Arzela-Ascoli type  
argument) from the analogous
$C^0$ condition
$
     \lim_{|p|\to \infty} |u(p)| = 0.
$
\end{remark}

\begin{theorem}\label{halfhelicoiduniqueness1}
Let $\Sigma$ be one of the components of $H\setminus Z$.
Suppose
$M$ is a connected minimal surface in $\overline{H^+}\setminus Z$ such that
$\partial M\subset \overline{\Sigma}$ and such that $M$ is either bounded 
or asymptotic to $\Sigma$ at infinity.

Then $M$ is a subset of $\Sigma$.
\end{theorem}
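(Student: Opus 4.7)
The plan is to use the foliation of $\RR^3\setminus Z$ by half-helicoids, following the strategy of Hardt--Rosenberg. Write $\Sigma_\alpha$ for the rotation of $\Sigma=\Sigma_0$ through angle $\alpha$ about $Z$ (WLOG we take $\Sigma=\Sigma_0$), so that $\overline{H^+}\setminus Z = \bigcup_{\alpha\in[0,\pi]}\Sigma_\alpha$ as a smooth foliation. Define a smooth function $F:\overline{H^+}\setminus Z\to[0,\pi]$ whose $\alpha$-level set is exactly $\Sigma_\alpha$: in cylindrical coordinates $(r,\phi,z)$ with the branch of $\phi$ chosen so that $\phi-z\in[0,\pi]$, take $F(r,\phi,z)=\phi-z$. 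The boundary hypothesis gives $F\equiv 0$ on $\partial M$; the goal is to prove $F\equiv 0$ on all of $M$.

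Set $\alpha^*:=\sup_M F$ and suppose for contradiction that $\alpha^*>0$. The first task is to show $\alpha^*$ is attained at an interior point $p^*$ of $M$. In the bounded case this is immediate by compactness. In the asymptotic case, I would establish that $F(q)\to 0$ as $q\to\infty$ along $M$: writing $q=p+u(p)\nu(p)$ for $p\in\Omega\subset\Sigma$ on the graphical tail, a direct computation using $F=\phi-z$ and the helicoidal parametrization yields a bound of the form $|F(q)|\le C\,|u(p)|\sqrt{1+r(p)^{-2}}$, so as long as $r(p)$ stays bounded below on $\Omega$, $F(q)\to 0$ by the $C^1$ decay of $u$. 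Hence $\{F\ge\alpha^*/2\}\cap M$ is precompact in $M$ and the supremum is attained; since $F=0$ on $\partial M$, the maximizer $p^*$ must lie in the interior of $M$.

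Now run the maximum principle. At $p^*$ the half-helicoid $\Sigma_{\alpha^*}$ passes through $p^*$, and since $F|_M$ has an interior maximum there, $\Sigma_{\alpha^*}$ is tangent to $M$ at $p^*$ with $M$ locally in $\{F\le\alpha^*\}$, i.e., on one side of $\Sigma_{\alpha^*}$. Since both surfaces are minimal, the maximum principle yields $M=\Sigma_{\alpha^*}$ in a neighborhood of $p^*$, and unique continuation for real-analytic minimal surfaces forces $M\subset\Sigma_{\alpha^*}$. But $\alpha^*>0$ gives $\overline{\Sigma}\cap\Sigma_{\alpha^*}=\emptyset$, so $\partial M=\emptyset$, and hence $M$ is a nonempty open connected subset of the entire smooth surface $\Sigma_{\alpha^*}$. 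This is incompatible with either hypothesis on $M$: in the bounded case, a nonempty bounded minimal surface without boundary is impossible by the convex hull principle; in the asymptotic case, $M\subset\Sigma_{\alpha^*}$ cannot be asymptotic to $\Sigma_0\ne\Sigma_{\alpha^*}$, since the two half-helicoids separate linearly in the radial variable at infinity. This contradiction proves $\alpha^*=0$, so $F\equiv 0$ and $M\subset\Sigma$.

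The main obstacle is the first step in the asymptotic case: converting the $C^1$ decay of the graphical offset $u$ into pointwise decay of $F$ along $M$ at infinity. This requires some care because $|\nabla F|=\sqrt{1+r^{-2}}$ blows up as one approaches $Z$, so one must verify that the relevant tail of the graph stays uniformly bounded away from the axis (or else control the product $|u|\cdot|\nabla F|$ directly). Once that is in hand the remainder is a standard tangency-at-an-interior-maximum argument using the maximum principle for minimal surfaces.
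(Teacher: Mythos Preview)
Your argument is essentially correct and follows the Hardt--Rosenberg foliation strategy the paper alludes to, but it is genuinely different from the paper's own proof. The paper does \emph{not} work directly with $F$ on $M$. Instead it first reduces the asymptotic case to the bounded one by a rotation trick: rotate $M$ about $Z$ through a small angle $-\epsilon$ and intersect with $H^+$ to obtain a bounded surface $M(\epsilon)$, which is still a counterexample if $M$ was. For the bounded case the paper then solves an obstacle Plateau problem---it encloses $M$ in a finite solid cylinder $C$, sets $\Gamma=\partial(C\cap\Sigma)$, and minimizes area among disks in $\overline{H^+}$ with boundary $\Gamma$ lying above the obstacle $M$. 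The resulting disk $D$ (not $M$) is what the barrier is applied to: the authors rotate the \emph{opposite} half-helicoid $\Sigma'=\Sigma_\pi$ through $H^+$ toward $\Sigma$ until it first touches $D$, and then invoke the interior or boundary maximum principle (with a separate case analysis along the segment $I=\Gamma\cap Z$, since $\partial D$ actually meets $Z$).

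Your direct argument is more elementary: no area-minimization, no boundary maximum principle along $Z$, no case analysis---because $M$ itself stays in $\overline{H^+}\setminus Z$, the interior tangency step suffices. What the paper's route buys is robustness against exactly the issue you flag: the rotation-and-cut produces a surface whose boundary lies visibly on $H$, so one does not need to quantify the decay of $F$ along the end of $M$ near the axis. In effect the paper trades your asymptotic estimate for a Plateau/obstacle construction. Your concern about controlling $|u|\cdot|\nabla F|$ when $r\to 0$ along $\Omega$ is legitimate; if you want to avoid it without the obstacle machinery, the cleanest patch is to adopt the paper's rotation trick to reduce to the compact case and then run your direct maximum-principle argument there.
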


\begin{proof} We may assume without loss of generality that $\Sigma
$ is the component of $H\setminus Z$
containing the positive $x$-axis. 
We may also assume that $M$ is bounded:
To see this, rotate $M$ about $Z$ through an angle $-\eps$ and intersect with $H^+$ to get a new minimal surface $M(\eps)$.  If $M$ were a counterexample to the theorem, then for sufficiently small 
 $\eps>0$, the surface $M(\eps)$ would also be a counterexample.  
 Furthermore, $M(\eps)$ is bounded since $M$ is asymptotic to $\Sigma$.
 
 Thus from now on we assume that $M$ is bounded.

Let $C$ be a closed, solid circular cylinder of finite height that contains $M$ and whose axis of symmetry is $Z$.
Let $\Gamma$ be the boundary of $C\cap\Sigma$.  Minimize area among disks in 
  $\overline{H^+}$ with boundary $\Gamma$ and with $M$ as an obstacle (i.e., among disks $\Delta$ such that $M$ is contained in the closed region bounded by $\Delta\cup (C\cap \Sigma)$.)
Call the resulting disk $D$.
  
Let $\Sigma'$ be the half-helicoid $H\setminus (Z\cup \Sigma)$.  Rotate $\Sigma'$ in $H^+$ until
it touches $D$ at an interior point or until it becomes tangent to $D$ at some point of the interval 
 $I :=\Gamma\cap Z$.  Call the resulting half-helicoid $\Sigma^*$.
 
Note that one of the following must occur:
\begin{enumerate}
 \item[1.] $\Sigma^*$ touches $D$ at an interior point of $D$.
 \item[2.] $\Sigma^*$ is tangent to $D$ at an interior point of $I$.
 \item[3.]$\Sigma^*$ is tangent to $D$ at an endpoint of $I$.
\end{enumerate}

In case 1, $D$ is contained in $\Sigma^*$ by the maximum principle.
In case 2, $D$ is contained in $\Sigma^*$ by the boundary maximum principle.
In case 3, $\Sigma^*=\Sigma$ (since $D$ and $\Sigma$ are tangent at the endpoints of $I$)
and thus $D$ is contained in $\Sigma^*$.
  
In all three cases, we have shown that $D$ is contained in $\Sigma^*$.  Since 
   $\partial D\subset \Sigma$, 
 this implies that $\Sigma^*=\Sigma$.  Since $M$ lies between $\Sigma$ and $\Sigma^*$, in fact
 $M$ is contained in $\Sigma$.
 \end{proof}

\section{Nonexistence of embedded, periodic, higher genus helicoids with small twist angles}
\label{NONEXISTENCE}

 In this section we study properly embedded,
periodic minimal surfaces invariant under a screw motion
$\sigma_{2h}$ and asymptotic to the helicoid. We will show that if
$h\leq\pi/2$ and the if the intersection of the surface  with some
 horizontal plane is  a  line,  then it
must be the helicoid.

\stepcounter{theorem}\subsection{The total curvature of
almost-helicoidal curves}

The curvature of  a space curve
 $\theta \mapsto c(\theta)$ is given by
     $\frac{|c\,'(\theta)\times c\,''(\theta) |}{|c\,'(\theta) |^3}$,
 and therefore the total
curvature from $\theta=0$ to $\theta=A$ is
\begin{equation}\label{tot_curv}
\int_c k\,ds
=
\int _0 ^A\frac{|c\,'(\theta)\times
c\,''(\theta) |}{|c\,'(\theta) |^2}\,d\theta.
\end{equation}
 Now suppose that
\begin{equation*}
c(\theta)=(r\cos \theta, r\sin \theta, \theta+f(r,\theta))
\end{equation*}
for some function $f(r,\theta)$. In the special case that
   $f$ is constant,
 the curve $c$  is a standard helix. More
generally, suppose that
  $\frac{\partial  f}{\partial\theta}$ and
   $\frac{\partial ^2 f}{\partial  \theta ^2}$
tend to zero uniformly as $r\rightarrow \infty.$
 Since
 $
  |c\,'(\theta)|^2 = r^{2} +  (1+\frac{\partial   f}{\partial\theta} )^{2}$
  \,\,{and}\,\,
$|c\,''(\theta)|^2  = r^2 +  ( \frac{\partial ^2 f}{\partial   \theta ^2} )^2$,
   it follows that
$$
     |c\,''(\theta)|<|c\,'(\theta)|
$$
for $r$
sufficiently large. Therefore
\begin{equation*}
\frac{|c\,'(\theta)\times c\,''(\theta)|}{|c\,'(\theta)|^2}
\leq
\frac{|c\,'(\theta)|    \,    |c\,''(\theta)|}{|c\,'(\theta)|^2}
             <
\frac{|c\,'(\theta)|^2 }{|c\,'(\theta)|^2}=1
 \end{equation*}
for $r$ sufficiently large. It follows  that the total curvature~\eqref{tot_curv} is
 strictly less than~$A$. We state this
observation as a lemma.

\begin{lemma}\label{lessthanA}
 Let $f(r,\theta)$ be a function defined for $r\ge R_0$ and  $\theta\in [0,A]$.  Suppose
 that
          $\frac{\partial f}{\partial \theta}$ and
$\frac{\partial ^2 f}{\partial \theta ^2}$  tend to zero uniformly as
$r\rightarrow \infty$.
 Then for every sufficiently large~$r$, the curve
 \[
     \theta \in [0,A] \mapsto ( r \cos \theta, r\sin\theta, \theta + f( r ,\theta))
 \]
has total curvature strictly less than $A$.
\end{lemma}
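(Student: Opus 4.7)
The argument is essentially spelled out in the paragraph preceding the lemma statement, so my plan is simply to formalize that computation. First I would compute $c'(\theta)$ and $c''(\theta)$ by direct differentiation of $c(\theta) = (r\cos\theta,\, r\sin\theta,\, \theta + f(r,\theta))$, recovering the formulas $|c'(\theta)|^2 = r^2 + (1 + \partial f/\partial\theta)^2$ and $|c''(\theta)|^2 = r^2 + (\partial^2 f/\partial\theta^2)^2$ already displayed in the text.

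Next I would invoke the uniform-decay hypothesis to choose $R_1 \ge R_0$ so large that for every $r \ge R_1$ and every $\theta \in [0,A]$, both $|\partial f/\partial\theta|$ and $|\partial^2 f/\partial\theta^2|$ are strictly less than $1/2$. For such $r$ this gives $(\partial^2 f/\partial\theta^2)^2 < 1/4 < (1 + \partial f/\partial\theta)^2$, and comparing the displayed formulas term-by-term yields $|c''(\theta)| < |c'(\theta)|$ for every $\theta \in [0,A]$.

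Finally I would apply the elementary bound $|u \times v| \le |u|\,|v|$ to the integrand in \eqref{tot_curv}, obtaining
$|c'(\theta) \times c''(\theta)|/|c'(\theta)|^2 \le |c''(\theta)|/|c'(\theta)| < 1$
pointwise on $[0,A]$. Since $\theta \mapsto |c''(\theta)|/|c'(\theta)|$ is continuous on the compact interval $[0,A]$ with all values strictly less than $1$, it attains a maximum $\lambda < 1$ there, so integrating in \eqref{tot_curv} over $[0,A]$ bounds the total curvature by $\lambda A < A$.

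There is really no serious obstacle here; the lemma is a bookkeeping wrap-up of the estimate derived in the preamble. The only minor care needed is to upgrade the pointwise strict inequality $|c''| < |c'|$ to a uniform strict bound so that strict inequality survives integration, and this is immediate from compactness of $[0,A]$ together with continuity of the ratio $|c''|/|c'|$.
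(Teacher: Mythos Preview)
Your proposal is correct and follows essentially the same route as the paper's argument: compute $|c'|$ and $|c''|$, use the uniform decay hypothesis to get $|c''|<|c'|$ for large $r$, and bound the curvature integrand by $1$. The only addition is your explicit compactness step to upgrade the pointwise strict inequality to a uniform one before integrating; the paper leaves this implicit, but it is the same argument.
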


\stepcounter{theorem}\subsection{Nonexistence of examples with
$h\leq\pi/2$}\label{nonexistence}

We will use Lemma~\ref{lessthanA} to prove

\begin{theorem}\label{nosmallh} Let $S$ be a properly immersed minimal surface
 that lies in the slab $\{-h\leq z\leq h\}$ and that is bounded by the
 two lines $H\cap \{z=h\}$ and $H\cap \{z= -h\}$, where $H$ is the standard 
 helicoid~\eqref{helicoid}.
  Suppose
 that $S$ is asymptotic\footnote{See Definition~\ref{asymptoticdef}.}
   to $$H\cap \{-h\leq z\leq h\}.$$
    If $h\le \pi/2$, then 
 $
     S = H \cap \{ -h \le z \le h \}.
 $
\end{theorem}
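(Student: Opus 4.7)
The plan is to combine the Gauss-Bonnet theorem on a large compact subsurface of $S$ with Lemma~\ref{lessthanA}, using the hypothesis $h\le \pi/2$ to close the estimate. The hypothesis will enter through the inequality $4h\le 2\pi$.

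\emph{Setup.} Fix $R$ large and consider $S_R := S\cap\{r\le R\}$. By the asymptotic hypothesis (Definition~\ref{asymptoticdef}) together with standard elliptic regularity upgrading the $C^1$ decay of $u$ to $C^2$ decay, for $R$ sufficiently large the set $S\cap\{r=R\}$ consists of two smooth curves close to the two helical arcs of $H\cap\{r=R\}\cap\{|z|\le h\}$. Thus $\partial S_R$ is a piecewise-smooth closed curve made of (i) two bounding line segments (each a geodesic on $S$, since a straight line in $\mathbb R^3$ has vanishing geodesic curvature on any smooth surface containing it), (ii) the two cylinder arcs, and (iii) four corners whose interior angles $\theta_i$ tend to $\pi/2$ as $R\to\infty$, since the helicoid meets the coaxial cylinder orthogonally and $S$ is $C^1$-close to $H$ at infinity.

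\emph{Applying the lemma and Gauss-Bonnet.} Each cylinder arc can be parametrized as $\theta\in[-h,h]\mapsto(R\cos\theta,R\sin\theta,\theta+f(R,\theta))$ with $f_\theta,f_{\theta\theta}\to 0$ uniformly as $R\to\infty$. By Lemma~\ref{lessthanA}, for $R$ large each arc has total space curvature strictly less than $2h$. Since $|\kappa_g|\le \kappa$ on any surface and $\kappa_g\equiv 0$ on straight segments,
\[
\Bigl|\int_{\partial S_R}\kappa_g\,ds\Bigr| < 4h \le 2\pi,
\]
where the last inequality is the decisive use of $h\le \pi/2$. Applying Gauss-Bonnet to $S_R$ (passing to the orientable double cover if necessary), together with $K\le 0$ (minimality) and $\sum(\pi-\theta_i)\to 2\pi$, gives
\[
2\pi\chi(S_R) = \int_{S_R}K\,dA+\int_{\partial S_R}\kappa_g\,ds+\sum(\pi-\theta_i) < 0 + 4h + 2\pi + o(1),
\]
so $\chi(S_R) < 1 + 2h/\pi + o(1)$. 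The hypothesis $h\le \pi/2$ makes the right side less than $2$, forcing the (integer) Euler characteristic $\chi(S_R) \le 1$ for $R$ large, which matches the topological type of the helicoid piece $H\cap\{|z|\le h\}$.

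\emph{Rigidity.} To convert this topological conclusion into the geometric equality $S=H\cap\{|z|\le h\}$, I would invoke the half-helicoid foliation of $\mathbb R^3\setminus Z$ and the function $F:=\theta-z$ on $S\setminus Z$, as in the proof of Theorem~\ref{nonperiodicdecomp}: $F$ is constant on each half-helicoid, so by the maximum principle and analyticity (as in Claim~1 of that proof) it has no interior local extrema on $S$; its values on the rays of $\partial S$ are $\{0,\pi\}$, and $F\to\{0,\pi\}$ along the end of $S$ by the asymptotic hypothesis. A level-set analysis mirroring Claims~3--5 of the proof of Theorem~\ref{nonperiodicdecomp}, now sharpened by the topological constraint $\chi(S_R)\le 1$ and the strict inequality from Lemma~\ref{lessthanA}, forces each level set $F^{-1}(\alpha)$ with $\alpha\in(0,\pi)$ to be empty, so $F$ takes only the values $\{0,\pi\}$ on $S$, i.e.\ $S\subset H$, whence $S=H\cap\{|z|\le h\}$. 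The main obstacle I anticipate is precisely this last step: the Gauss-Bonnet/lemma combination nominally constrains only the topology of $S$, and the passage to actual equality requires a careful marriage with the half-helicoid foliation argument, with $h\le \pi/2$ playing the sharp role of keeping $4h+\sum(\pi-\theta_i)<4\pi$ in the limit.
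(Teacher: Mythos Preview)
Your setup and use of Lemma~\ref{lessthanA} match the paper's: you correctly identify $\partial S_R$ as two line segments, two nearly-helical arcs, and four right-angle corners, and you correctly bound the total (space) curvature of the arcs by $2h$ each. But from that point on, the argument has a genuine gap.

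The Gauss--Bonnet route only yields an \emph{upper} bound $\chi(S_R)\le 1$, because $\int_{S_R}K\,dA$ can be arbitrarily negative. Nothing you have written excludes $\chi(S_R)=0,-1,\dots$, so you cannot conclude that $S_R$ is a disk. Your ``Rigidity'' paragraph does not repair this: the function $\theta$ (and hence $F=\theta-z$) need not be globally well-defined on $S\setminus Z$ unless you already know the components are simply connected; the theorem does not assume $Z\subset S$ or any $\rho_X,\rho_Y,\rho_Z$ symmetry, so Claims~3--5 from Theorem~\ref{nonperiodicdecomp} do not carry over; and the sentence ``sharpened by the topological constraint $\chi(S_R)\le 1$'' is not an argument. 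You yourself flag this step as the obstacle, and indeed it is where the proof breaks down.

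The paper exploits the same total-curvature bound differently. Rather than feeding it into Gauss--Bonnet, it observes that $\partial S_R$ is an \emph{extremal} Jordan curve (it lies on the boundary of the convex set $C_R\cap\{|z|\le h\}$) with total curvature strictly less than $4\pi$. By the Nitsche--Meeks--Yau uniqueness theorem (Theorem~\ref{unicity}), such a curve bounds a \emph{unique} minimal surface, and that surface is an embedded disk. Since $S_R$ is a minimal surface spanning $\partial S_R$, it must be that disk; hence $S$ is simply connected. Schwarz reflection in the two boundary lines then produces a singly periodic, simply connected, embedded minimal surface, which by the Meeks--Rosenberg classification is the helicoid. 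The two external ingredients (the $4\pi$ uniqueness theorem and the Meeks--Rosenberg result) are what you are missing; the half-helicoid level-set analysis is not the mechanism here.
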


As a corollary we have

\begin{theorem}\label{nosmallh1} Suppose that $S$ is a properly immersed minimal surface
invariant under a screw motion $\sigma _{2h}$ with $h\leq \pi /2$, and that $S$ is asymptotic
to $H$ as $r=\sqrt{x^2+y^2}$ tends to infinity.
 If  the intersection of $S$ with some horizontal plane is a line,
then $S=H$.
\end{theorem}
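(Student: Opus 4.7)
The plan is to reduce Theorem~\ref{nosmallh1} to Theorem~\ref{nosmallh} by restricting $S$ to one fundamental domain of the $\sh$-action.

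First I would normalize so that the horizontal line contained in $S$ is exactly $H\cap\{z=-h\}$. The helicoid $H$ is invariant under the one-parameter group of screw motions $\sigma_s$, $s\in\RR$, where $\sigma_s(r\cos\theta,r\sin\theta,z)=(r\cos(\theta+s),r\sin(\theta+s),z+s)$, and every $\sigma_s$ commutes with $\sh$. If the horizontal line in $S$ lies in the plane $\{z=c\}$, then applying $\sigma_{-h-c}$ moves it to the plane $\{z=-h\}$ while preserving both the $\sh$-symmetry of $S$ and the property that $S$ is asymptotic to $H$. After this normalization $S\cap\{z=-h\}$ is, by hypothesis, a single line $\ell$. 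Since $S$ is asymptotic to $H$, the set $S\cap\{z=-h\}$ lies close to $H\cap\{z=-h\}$ at points of arbitrarily large radius $r$; because $\ell$ is a complete line in the plane $\{z=-h\}$, this forces $\ell=H\cap\{z=-h\}$, since two distinct lines in a plane cannot be uniformly close at infinity.

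Next, set $H^{*}:=H\cap\{-h\le z\le h\}$ and $S^{*}:=S\cap\{-h\le z\le h\}$. Because $H$ is $\sh$-invariant, $\sh(\ell)=H\cap\{z=h\}$; since $S$ is $\sh$-invariant, applying $\sh$ to the equality $S\cap\{z=-h\}=\ell$ gives $S\cap\{z=h\}=\sh(\ell)=H\cap\{z=h\}$. Thus $S^{*}$ is a properly immersed minimal surface in the slab $\{-h\le z\le h\}$ whose boundary is exactly the pair of lines bounding $H^{*}$. Restricting the asymptotic description of $S$ to this slab shows that $S^{*}$ is asymptotic to $H^{*}$ in the sense of Definition~\ref{asymptoticdef}. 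Since $h\le\pi/2$, Theorem~\ref{nosmallh} applies and yields $S^{*}=H^{*}$. The $\sh$-invariance of both $S$ and $H$ then propagates the equality slab by slab:
\[
  S=\bigcup_{k\in\ZZ}\sh^{k}(S^{*})=\bigcup_{k\in\ZZ}\sh^{k}(H^{*})=H.
\]

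The main obstacle lies in the first step, where one must exhibit a single symmetry that both normalizes the horizontal line to lie at $z=-h$ and preserves the two hypotheses of the theorem, namely asymptoticity to $H$ and $\sh$-symmetry. The one-parameter group $\{\sigma_s\}$ of screw motions of $H$ is exactly the right tool, and the asymptotic condition then pins the line down to be $H\cap\{z=-h\}$ precisely. Once this normalization is done, the deep work is packaged inside Theorem~\ref{nosmallh}, and the rest of the argument is bookkeeping using $\sh$-invariance.
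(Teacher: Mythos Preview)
Your proof is correct and follows essentially the same strategy as the paper: normalize so that the given horizontal line is $H\cap\{z=-h\}$, use $\sh$-invariance to conclude the intersection at $z=h$ is $H\cap\{z=h\}$, apply Theorem~\ref{nosmallh} to the slab $\{-h\le z\le h\}$, and propagate by $\sh$. Your normalization via the one-parameter screw-motion group $\{\sigma_s\}$ is more explicit than the paper's ``without loss of generality,'' but the content is the same.
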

\begin{proof}[Proof of Theorem~\ref{nosmallh1}]
Without loss of generality we may assume that $S\cap \{z=-h\}$ is
a  line.  By the $\sigma_{2h}$ invariance, $S\cap \{z=h\}$ must also be a  line.
Since $S$ is asymptotic to $H$ away from $Z$, these lines must be the lines
$H\cap \{z=-h\}$ and $H\cap \{z=h\}$.  Thus $S\cap \{ -h \le z \le h\}$ satisfies the
hypotheses of Theorem~\ref{nosmallh}, from which we conclude that  $S=H$.
\end{proof}

 \begin{proof}[Proof of Theorem~\ref{nosmallh}] By assumption,
 outside of any sufficiently large cylinder
            $C_R=\{x^2+y^2\leq R^2,\,\, |z|\leq h\}$, 
 one end of $S$ is
 a graph of the form
 $$\{(r \cos \theta, r\sin \theta, \theta +f(r,\theta))\,
 : -h\leq \theta \leq h, \,\,\,\, r>R\},$$
 with $f(-h,r)=f(h,r)=0$. 
 (A similar discussion applies to the other end.  Indeed, after rotation by $\rho_Z$, the other
 end has the same form.)
 By Proposition~\ref{radialdecay}\eqref{beta_rate2}, 
     \begin{align*}
  \left|\frac{\partial f}{\partial \theta}\right|&=o(r^{-\beta}), \\
 \nonumber \left|\frac{\partial ^2 f}{\partial \theta ^2}\right|&=o(r^{-\beta})
\end{align*}
  for every $\beta<\pi/2h$.
 Since $h\leq \pi /2$, these estimates hold for every $\beta<1$.

Let $S_R = S \cap C_R$ be the portion of $S$ inside the cylinder $C_R$.
Note that $\partial S_R$ is an extremal curve: it lies on the boundary of the convex set
$C_R\cap \{|z|\le h\}$.

\vspace{.1in} {\bf Claim}. For $R$ sufficiently large, the total
curvature of  $\partial S_R$ is strictly less than $4\pi$.

 \begin{proof}[Proof of Claim.]
 The curve $\partial S_R$ consists of two line
 segments---one on the top and one on the bottom disk of  $\partial C_R$---and
 two  nearly helical curves on $\partial C_R$.
 There are four corners where the curves and the line segments meet
 orthogonally.
 
  By
 Lemma~\ref{lessthanA}, each of the two nearly helical arcs
 has total curvature strictly less than $2h$, provided $R$ is sufficiently large.   Thus 
 $\partial S_R$ has total curvature
 strictly less than
 \[
      2(2h) + 4(\pi/2) = 4h + 2\pi \le 4\pi
 \]
 since $h\le \pi/2$ and since each corner contributes $\pi/2$ to the total curvature.
 \end{proof}

We now  apply the following uniqueness result \nocite{EWW1}:

\begin{theorem} \label{unicity} A  smooth, extremal
 Jordan curve with total curvature at most
$4\pi$ bounds precisely one minimal surface and that minimal
surface is an embedded disk.
\end{theorem}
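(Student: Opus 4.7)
The plan is to combine the Douglas--Rad\'o solution of the Plateau problem with a density estimate that simultaneously yields embeddedness and uniqueness. Existence of at least one minimal disk $D_0$ with $\partial D_0 = \Gamma$ is classical; the extremal hypothesis further places every minimal surface $M$ with $\partial M = \Gamma$ inside the convex hull of $\Gamma$, by the convex hull property of minimal surfaces.

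The central ingredient is a density inequality: for any (possibly branched) minimal surface $M$ with $\partial M = \Gamma$ and any $p \notin \Gamma$,
\[
    \Theta(M,p) \;\le\; \frac{1}{4\pi}\, L\bigl(\pi_p(\Gamma)\bigr) \;\le\; \frac{1}{4\pi}\int_\Gamma k\,ds \;\le\; 1,
\]
where $\pi_p$ denotes radial projection from $p$ onto the unit sphere and $L$ denotes spherical length. I would derive the first inequality by comparing $M$ with the cone over $\Gamma$ from $p$ and applying the monotonicity formula; the second is F\'ary's observation that radial projection cannot increase total curvature; and a short argument shows the combined inequality is strict except when $M$ is a flat disk through $p$. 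By Allard regularity, the strict bound $\Theta(M,p) < 1$ rules out both branch points and interior self-intersections, so $M$ is embedded and unbranched.

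To identify $M$ as a topological disk, I would apply Gauss--Bonnet: since $K \le 0$ on a minimal surface and $|k_g| \le k$ along $\Gamma$,
\[
    2\pi\chi(M) \;=\; \int_M K\,dA + \int_\Gamma k_g\,ds \;\le\; \int_\Gamma k\,ds \;\le\; 4\pi.
\]
Combined with $\chi(M) = 1 - 2g$ for $M$ orientable with one boundary component, and a refinement via horizontal slicing—using F\'ary's bound $|\Gamma \cap P| \le 4$ on generic planes $P$, so that each regular slice of $M$ is a disjoint union of at most two embedded arcs—this forces the genus to vanish.

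The main obstacle is uniqueness. Suppose $D_1 \ne D_2$ were two distinct minimal disks bounding $\Gamma$. Their union, viewed as an integral $2$-current, is closed, and at any interior point of $D_1 \cap D_2$ the two sheets would superpose to violate the density bound above unless they are tangent there. A boundary maximum principle argument along $\Gamma$ shows that any touching extends, and unique continuation for minimal surfaces then forces $D_1 = D_2$ throughout. Controlling the combinatorial structure of $D_1 \cap D_2$ near $\Gamma$—and ensuring the density estimate can be applied to the combined varifold without losing the strict inequality—is the subtlest part of the argument.
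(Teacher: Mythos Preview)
The paper does not prove this theorem from scratch; it simply cites two results and combines them. Meeks--Yau show that a smooth extremal Jordan curve either bounds two distinct embedded minimal disks, or else bounds a unique minimal disk and no other minimal surface of any topological type. Nitsche shows that a smooth Jordan curve with total curvature at most $4\pi$ bounds a unique minimal disk. Putting these together, the Meeks--Yau alternative collapses to the second case, and the theorem follows in two lines.

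Your proposal attempts an independent proof. The density inequality you invoke is correct and is exactly the Ekholm--White--Wienholtz argument; it does give embeddedness and absence of branch points for any single minimal surface with boundary $\Gamma$. But two of your remaining steps have real gaps.

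First, the Gauss--Bonnet computation yields only $\chi(M)\le 2$, which is vacuous for a surface with one boundary component (where $\chi=1-2g$). You acknowledge this and gesture at a ``refinement via horizontal slicing,'' but nothing you wrote actually forces $g=0$. Ekholm--White--Wienholtz do obtain the disk conclusion, but by a different mechanism than plain Gauss--Bonnet.

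Second, and more seriously, your uniqueness argument does not close. The density bound $\Theta\le 1$ applies to a \emph{single} minimal surface with boundary $\Gamma$; applied to the varifold $|D_1|+|D_2|$, whose boundary is $2|\Gamma|$, it gives only $\Theta\le 2$, which a transverse intersection point attains without contradiction. More fundamentally, nothing in your outline rules out the possibility that $D_1$ and $D_2$ are \emph{disjoint} in their interiors---they share $\Gamma$, but they need not be tangent there (the conormals can differ), so neither the interior density argument nor the boundary Hopf lemma applies. This disjoint case is precisely the one the Meeks--Yau dichotomy isolates, and it is Nitsche's theorem---proved by entirely different (analytic/continuity) methods---that eliminates it under the $4\pi$ hypothesis. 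Your sketch does not contain an idea that substitutes for Nitsche's argument.
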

Meeks and Yau \cite{my2} prove that a smooth, extremal Jordan
curve either bounds two distinct embedded minimal disks or it
bounds a unique minimal disk and no other minimal surface of any
genus. Together with the result of Nitsche \cite{Nit1} that  a
smooth Jordan curve with total curvature not greater than $4\pi$
bounds a unique minimal disk, they arrive at
Theorem~\ref{unicity}.

The curve $\partial S_R$ we are dealing with has four corners that can be
smoothed in the surface with an arbitrarily small increase in total curvature, so
that the smoothed curve will also have total curvature strictly less than $4\pi$.
Hence by Theorem~\ref{unicity} (applied to the smoothed curve), $S_R$ is simply connected for all sufficiently large $R$, and therefore $S$ is
simply connected. 

Thus the surface obtained from $S$ by  repeated Schwarz reflection about
the boundary lines is a nonplanar, singly periodic, embedded and simply
connected minimal surface.  By a theorem  of Meeks and Rosenberg \cite{mr93},
the only such surface is the helicoid.
 \end{proof}

\begin{remark}  For $h<\pi/2$, it is also possible to prove
Theorem~\ref{nosmallh}, without recourse to Theorem~\ref{unicity},
as follows.  The decay estimates of
Proposition~\ref{radialdecay} can be used to show
that (for $R$ sufficiently large) the curve $\partial S_R$ projects
monotonically to the boundary of a  convex region $\Omega$ in the plane $\{x=0\}$.
(One shows that the curvature at each point of the projections of the perturbed
helical arcs is strictly positive.)
By a theorem of Rado (see for example Sections 398 and 400 in Nitsche \cite{ni2}),
$S_R$ must be a graph over $\Omega$.   In particular, $S_R$ must be simply
connected.

If $h=\pi/2$, the curve $\partial S_R$ still projects monotonically to the boundary
of a region $\Omega$ in the plane $\{x=0\}$.   However, convexity of $\Omega$ at the projections
of the corner points of $\partial S_R$ seems to be delicate.  In particular, the convexity does not
seem to follow from the decay estimates in Proposition~\ref{radialdecay}. 
\end{remark}

\section{Asymptotic behavior of symmetric, properly immersed  minimal surfaces with one end and finite topology.}
\label{asymptoticbehavior}

The goal of this section is to prove the following theorem:
\begin{theorem}\label{onehelicoidalend}
Let ${\mathcal S}\subset \RR^3$ be a properly immersed, nonplanar minimal
surface with finite genus, one end, and bounded curvature. Suppose
\begin{enumerate}
\item[1.]  ${\mathcal S}$
contains
 $X\cup Z$, and
 \item[2.]
for some value of $c$, $\{x_3=c\}\cap {\mathcal S}$ has precisely one divergent component.
\end{enumerate}
 Then ${\mathcal S}$ is conformally a once-punctured Riemann surface, and ${\mathcal S}$   is asymptotic to a helicoid. 
\end{theorem}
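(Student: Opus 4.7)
The plan is to use the Weierstrass representation of $\Ss$ and to identify the asymptotic data at the single end with that of the helicoid.

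First I would pin down the conformal type. Finite genus, one end, and bounded Gauss curvature together force the end to be conformally parabolic: bounded intrinsic curvature gives the end uniform local Euclidean geometry, and the single-end properness rules out the hyperbolic conformal type. Hence $\Ss$ is conformally $\overline{\Ss}\setminus\{p\}$ for some compact Riemann surface $\overline{\Ss}$ with puncture $p$.

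Next let $(g, dh)$ be the Weierstrass data, where $g$ is the stereographic projection of the Gauss map and $dh = dx_{3} + i\,dx_{3}^{\ast}$ is the height differential. Because $Z\subset\Ss$, $x_{3}$ is a nonconstant proper harmonic function on the end, so $dh$ extends meromorphically to $p$ with a pole. Writing $dh = w^{-n}\,h(w)\,dw$ in a local holomorphic coordinate $w$ at $p$ with $h(0)\ne 0$, the number of divergent branches of $x_{3}^{-1}(c)$ accumulating at $p$ equals $n$, and the way those branches join inside $\Ss$ determines the number of divergent components of the level set. The hypothesis that some $\{x_{3}=c\}\cap\Ss$ has exactly one divergent component, together with the isometric involutions $\rho_{X}$ and $\rho_{Z}$ (which both preserve the unique end and hence fix $p$), pins down $dh$ at $p$ to the local form of the height differential of the helicoid: a double pole with purely imaginary leading coefficient.

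Then I would analyze the Gauss map at $p$. Bounded Gauss curvature combined with the double-pole structure of $dh$ forces $\log g$ to grow at most linearly with respect to the natural coordinate $\zeta = \int dh$ near $p$; the symmetries $\rho_{X}, \rho_{Z}$ then fix the leading coefficient, yielding $\log g \sim i\zeta$, which is precisely the Gauss map of the helicoid. Integration of the Weierstrass forms
\[
 \phi_{1} = \tfrac{1}{2}(g^{-1} - g)\,dh,\qquad \phi_{2} = \tfrac{i}{2}(g^{-1} + g)\,dh,\qquad \phi_{3} = dh,
\]
and comparison with the same integrals for the helicoid, establishes that $\Ss$ is asymptotic to the helicoid in the sense of Definition~\ref{asymptoticdef}.

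The main obstacle will be controlling the essential singularity of $g$ at $p$: a priori $g$ may oscillate arbitrarily wildly, and one needs the bounded-curvature estimate, the double-pole structure of $dh$, and the discrete symmetries $\rho_{X},\rho_{Z}$ to combine to force the leading asymptotic $\log g \sim i\zeta$. This refines the Weierstrass-data analysis carried out in Theorem~6.1 of \cite{hoffwhite1} from the variationally constructed examples of that paper to any properly immersed surface satisfying the hypotheses here.
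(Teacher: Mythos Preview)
Your outline has the right shape---identify the conformal type, pin down the pole structure of $dh$, control $g$ at the puncture, then read off the helicoid asymptotics---but several of the steps are either unjustified or genuinely incomplete, and the paper's proof takes a rather different route through two black-box results.

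\textbf{The conformal type.} Your argument that bounded curvature plus one end forces parabolicity is not a proof: bounded curvature gives uniform local geometry but does not by itself rule out a hyperbolic end (think of a half-plane with a flat metric). The paper instead invokes a theorem of Rodriguez--Rosenberg, which uses not only bounded curvature but also the level-set hypothesis~2, to conclude that $\mathcal{S}$ is of \emph{finite type}: conformally a once-punctured compact Riemann surface on which both $dh$ and $dg/g$ extend \emph{meromorphically}. This is much stronger than what you claim, and it is exactly what makes the rest of the argument work.

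\textbf{The Gauss map at the puncture.} You correctly flag control of the essential singularity of $g$ as the main obstacle, but you do not resolve it. Saying that bounded curvature forces $\log g$ to grow at most linearly in $\zeta=\int dh$ is not enough: linear growth is compatible with $dg/g$ having an essential singularity at $p$, and without meromorphicity you cannot read off a leading term $\log g\sim i\zeta$ at all. The paper sidesteps this entirely: Rodriguez--Rosenberg already gives that $dg/g$ is meromorphic at $p$, so the task reduces to computing the order of the pole. That computation is where the symmetries actually enter: $\rho_Z$ pairs zeros and poles of $g$ away from $Z$, and $\rho_X,\rho_Z$ together force even branching at the origin, so a Riemann--Roch count shows the pole order of $dg/g$ at $p$ is even. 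Bounded curvature (via $k=|dg/g|\big/|dh|$ along $Z$, where $|g|=1$) then caps the order at two. Hence the order is exactly two.

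\textbf{The final step.} Rather than integrating the Weierstrass forms by hand, the paper applies a result of Hoffman--McCuan: a minimal annular end, conformally a punctured disk, on which $dh$ and $dg/g$ both have double poles and $dh$ has no residue, and which contains a vertical and a horizontal ray, is asymptotic to a helicoid. All of these hypotheses have been verified (the rays come from $X\cup Z\subset\mathcal{S}$; the residue of $dh$ vanishes by Stokes).

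In short, the paper's proof is almost entirely a matter of quoting Rodriguez--Rosenberg and Hoffman--McCuan and then doing a pole-order bookkeeping argument; your sketch would require you to reprove substantial parts of both results from scratch, and the part you identify as hardest is precisely the part you leave open.
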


Note that any level set $M\cap\{z=c\}$ of a properly embedded minimal surface  $M\subset\RR^3$ can be decomposed uniquely as a union of connected  $C^1$, properly immersed curves, all intersections and self-intersections of which are transverse. The intersection points are precisely the points of tangency of $M$  and the plane  $\{z=c\}$.
Thus hypothesis 2 of Theorem~\ref{onehelicoidalend} is equivalent to: {\em there are only finitely many points of tangency of $\{z=c\}$ and $M$, and  $M\cap\{z=c\}$ can be written as the union of finitely many connected $C^1$ immersed curves, exactly one of which is not closed.}

\begin{remark}

Our proof of Theorem~\ref{onehelicoidalend} is similar to the proof in [HW, \S 6.1] that  
the surfaces constructed
in that paper are conformally punctured tori and are asymptotic to a  
helicoid at infinity.

\end{remark}

\begin{proof}[Proof of Theorem~\ref{onehelicoidalend}]
The surface $\Ss$ satisfies the hypotheses of the following theorem of 
Rodriguez and Rosenberg \cite{RodRos}:
 
{\em Suppose $\Ss \subset \RR^3$ is a properly immersed minimal  
surface with one end
and with bounded curvature.  Suppose also that at some level $x_3=c$,  
the intersection
$\{x_3=c\} \cap \Ss$ consists of finitely many curves with finitely many  
intersections.
Then $\Ss$ is of finite type, i.e., $\Ss$ is conformally a once-punctured  
Riemann surface, the puncture
corresponding to the end, and the one-forms $dg/g$ and $dh$ 
are meromorphic on the compact surface.}
 
 Here, $g$ is the stereographic projection of the  Gauss map from the north pole, and 
    $dh=dx_3+i\, dx^*_3$ is a holomorphic one form on $M$. (The function  $x_3^*$ is a harmonic conjugate of $x_3$; it is locally  
well-defined up to an additive
constant.)  Note that $dh$ is closed but is not, in general, exact.

 \vspace{.1in}
 
 {\bf Claim.} The one form $dh$ has a double pole at the puncture and no residue. The
 one form $dg/g$ also has a double pole at the puncture.
 \vspace{.1in}
 
 Assuming the claim, we  can complete the proof of the theorem by using the following result of Hoffman and McCuan \cite{HOMc03}:
 {\em
Let $E\subset \RR^3$ be a properly immersed, minimal annular end
that is conformally a punctured disk. Suppose that $dg/g$
and $dh $  both have double poles at the puncture and that $dh$
has no residue at the puncture. If $E$ contains a vertical ray and
a horizontal ray, then  $E$ is asymptotic
to a helicoid at infinity.} 

We apply this theorem to an end $E$ of ${\mathcal S}$ corresponding to a neighborhood of the puncture.
That $E$ contains the requisite rays follows from assumption 1 of the theorem.

 \begin{proof}[Proof of Claim]  By assumption 2, in a neighborhood of the puncture, the level curve
$ \{x_3=c\}\cap {\mathcal S}$ consists of two smooth curves emanating from  the puncture. Hence $dh$ has a pole of order two at that point. (That $dh$ must have a pole at the puncture follows from the maximum principle and the fact that ${\mathcal S}$ has one end.) Since $dh$ is holomorphic on $S$,  it follows from Stokes' Theorem that $dh$ has no residue at the puncture. In a (possibly smaller) neighborhood of the puncture, $dh$ can be assumed to have no zeros. Since $dh$ has a double pole at the puncture, the level curves
 $\{x_3=a\}\cap {\mathcal S}$, for any value of $a$ 
are embedded in this neighborhood. In particular, ${\mathcal S}$ has an embedded end.

 We claim that $ dg/g$ must have a pole at the puncture.  For suppose it  
does not.  Then $g$ has a
well-defined value at the puncture, and $g$ is meromorphic on the one-
point compactification of ${\mathcal S}$.
 Thus ${\mathcal S}$ has finite total curvature (\cite{oss}, \cite{os1} Chapter 9, \cite{HoffmanKarcherSurvey}, Section 2.3 ) and,  as observed above, an embedded end. Such ends are asymptotic to a plane or to an end of the catenoid. (\cite {sc1},  \cite{jm1}, \cite{HoffmanKarcherSurvey},  Section 2.3) On a catenoid $C$, $dh$ has a simple pole at an end (observe that the level curves   $\{x_3=a\}\cap C$ are circles that do not pass through the point corresponding to the end.) Therefore,  ${\mathcal S}$ is asymptotic to a plane, and since it has one end, the maximum principle implies that it is equal to a plane.  Since we are assuming that ${\mathcal S}$ is nonplanar,  the contradiction shows that $dg/g$ has a pole at the puncture.

We now determine the order of the pole of $dg/g$ at the puncture.  
First of all we will show that the order of the pole is even.
Note that on the compact Riemann surface $\mathcal{S}\cup\{\infty\}$,  
the number of zeros minus the number of poles (counting  
multiplicities) is even. (It is $2(1-m)$ where $m$ is the genus
of $\mathcal{S}$.)  Hence to show that the pole at the puncture has even  
order,
it suffices to show that
\begin{enumerate}[\upshape (i)]
\item\label{first} The number of poles of $dg/g$ on $\mathcal{S}$ is  
even.
\item\label{second} The number of zeros of $dg/g$ on $\mathcal{S}
\setminus \{0\}$
     is even.
\item\label{third} The origin (if it is a zero of $dg/g$) is a zero of even order.
\end{enumerate}
On ${\mathcal S}$, $dg/g$  has poles precisely at the zeros and poles  
of $g$. Along $Z$, $g$ is unitary so no poles of $dg/g$ occur there.  
By the $180^\circ$ rotation $\rho_Z$ about
$Z$, the zeros and poles of $g$ are paired, counting multiplicity.
Hence, $dg/g$ has an even number of poles on ${\mathcal S}$.
This establishes~\eqref{first}.
The zeros of $dg/g$ occur at branch points of $g$ (zeros of the Gauss  
curvature), and the multiplicity of the zero of $dg/g$ is equal to  
the branching order of $g$.
Except for the origin,
these zeros also occur in pairs (the point $p\in \mathcal{S}\setminus  
Z$ being paired with
$\rho_Z(p)$ and $q\in Z\setminus\{0\}$ being paired with $-q$.)
This proves~\eqref{second}.
The tangent plane at the origin is the plane $P$ given by $x_2=0$.
The $180^\circ$ rotations $\rho_X$ and $\rho_Z$ about $X$ and $Z$ are  
symmetries
of $\mathcal{S}$ and therefore also of $P\cap\mathcal{S}$.  Thus $P
\cap\mathcal{S}$
must have an even number, say $2k$, of curves passing through the  
origin.
The order of branching of $g$ at the origin is then $2(k-1)$.
   This establishes~\eqref{third}, completing the proof that $dg/g$  
has a pole at infinity of
   even order.

The principal curvature function of a minimal surface is given by the expression (\cite{HoffmanKarcherSurvey}, page 15)

\begin{equation} \nonumber
k=\sqrt{-K}=\frac{4|dg/g|/|dh|}{(|g| +1/|g|)^2}.
\end{equation}
Along $Z$, the tangent plane to ${\mathcal S}$ is vertical, so  $g$ is unitary and 

\begin{equation}\label{k}
           k = \frac{|dg/g|}{|dh|}.
\end{equation}

Since the curvature of  ${\mathcal S}$ is bounded by assumption and since (as we have already shown) $dh$ has a pole of order two at the end, we see from \eqref{k} that
$dg/g$ has a pole at infinity of order at most two. Since the order is even, it must be exactly two.

\end{proof}
\renewcommand{\qedsymbol}{}  
 \end{proof}

\section{Asymptotic behavior of symmetric, periodic, properly
embedded minimal surfaces with  finite topology and one end}

In  this section we give  estimates for the rate that a minimal
graph with certain helicoidal qualities actually converges to a
helicoid.  We will use the estimates  to prove that the periodic
examples constructed in \cite{hoffwhite1} are asymptotic to the
helicoid. The estimates of Proposition~\ref{radialdecay} were also
used in Section~\ref{NONEXISTENCE}, to prove that  periodic
examples with small twist angles do not exist.

\begin{proposition}\label{radialdecay}
Let $v(r,\theta)=\theta$, a function whose multigraph $S$ over $\RR
^2\setminus\{0\}$ is a half-helicoid of $H$.
Suppose   $S'$ is another minimal  multigraph of  a function $u$
over a
region of the form $$W_A=\{(r,\theta)| r\geq A,\,\,\,  |\theta |<h \}$$
with the property that  for $r\geq A$, $$u(r,\pm h) =v(r,\pm h)=\pm h.$$
Suppose further that  $S'$  has asymptotically vertical normals
as $R\rightarrow\infty$. If $w=u-v$ is bounded, then
\begin{equation}\label{beta_rate}
\begin{aligned}
      |w|&=o(r^{-\beta}),\\
      |Dw|&=o(r^{-(1+\beta)}), \\
      |D^2w|&=o(r^{-(2+\beta)})
\end{aligned}
\end{equation}
for any $\beta<\pi/2h$, where $D=(\frac{\partial}{\partial x_1},\frac
{\partial}{\partial x_2})$, and $x_1=r\cos \theta$, $x_2=r\sin \theta
$. In particular,
\begin{equation}\label{beta_rate2}
\begin{aligned}
    \left|\frac{\partial w}{\partial \theta}\right|&=o(r^{-\beta}), \\
    \left|\frac{\partial ^2 w}{\partial \theta ^2}\right|&=o(r^{-\beta})
\end{aligned}
\end{equation}
   for any $\beta<\pi/2h$.
\end{proposition}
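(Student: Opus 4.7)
The plan is to reduce the proposition to a decay estimate for a linear elliptic PDE on the polar strip $W_A$, in which the exponent $\pi/(2h)$ arises as the square root of the first Dirichlet eigenvalue of $-\partial_\theta^2$ on $(-h,h)$.  Since both $u$ and $v$ satisfy the minimal surface equation, writing the difference of the two equations as an integral along the segment joining $Dv$ to $Du$ shows that $w = u - v$ satisfies a linear divergence-form equation
\[
   \partial_i\bigl(a_{ij}(x)\,\partial_j w\bigr) = 0,
\]
where the coefficients $a_{ij}(x)$ come from linearizing the minimal-surface flux $p\mapsto p/\sqrt{1+|p|^2}$ along $Dv + t\,Dw$. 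Because the normals of $S'$ are asymptotically vertical, $|Du|\to 0$; together with $|Dv| = 1/r\to 0$ this gives $a_{ij}\to \delta_{ij}$, with analogous (weighted) bounds for $\partial_k a_{ij}$. Combined with the boundary condition $w(r,\pm h)=0$, we have a linear elliptic Dirichlet problem for $w$ on the strip.

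For any $\beta < \pi/(2h)$, the function $r^{-\beta}\cos(\beta\theta)$ is harmonic in the Euclidean Laplacian and strictly positive on $\{r>0,\,|\theta|\le h\}$ since $\beta h < \pi/2$. To obtain a strict super-solution for the perturbed operator $L$, let $f\in C^2[-h,h]$ be the positive solution of $f''(\theta) + \beta^2 f(\theta) = -1$, and set $\Phi_\beta(r,\theta) = r^{-\beta} f(\theta)$. A direct computation in polar coordinates yields $\Delta\Phi_\beta = -r^{-\beta-2}$, so the bounds $a_{ij}-\delta_{ij} = o(1)$ and $\partial_k a_{ij} = O(r^{-2})$ force $L\Phi_\beta < 0$ strictly on $\{r \ge R_0\}$ once $R_0$ is sufficiently large. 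Choosing $C$ so that $C\Phi_\beta \ge |w|$ on the circle $\{r = R_0\}$ and on the boundary lines $\theta = \pm h$ (where $w = 0$), the maximum principle on $\{r\ge R_0\}$ yields $|w|\le C\Phi_\beta = O(r^{-\beta})$. Applying this inequality once more with some $\beta_1\in(\beta,\pi/(2h))$ upgrades to $|w| = O(r^{-\beta_1}) = o(r^{-\beta})$, the $C^0$ estimate in \eqref{beta_rate}.

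The derivative bounds follow from standard interior elliptic regularity. Rescale: for large $R$, set $\tilde w_R(\tilde x) = w(R\tilde x)$ on the Cartesian annulus $\{1/2\le|\tilde x|\le 2\}\cap W_{A/R}$. The rescaled $\tilde w_R$ solves a uniformly elliptic divergence-form equation whose coefficients converge uniformly to those of $\Delta$ as $R\to\infty$, so the $C^0$ bound $|\tilde w_R| = o(R^{-\beta})$ promotes via Schauder-type estimates to $|D\tilde w_R|, |D^2\tilde w_R| = o(R^{-\beta})$ on a slightly smaller annulus; undoing the rescaling gives $|Dw| = o(r^{-\beta-1})$ and $|D^2 w| = o(r^{-\beta-2})$. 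Finally, since $\partial_\theta = -x_2\partial_1 + x_1\partial_2$, one has $|\partial_\theta w|\le r|Dw|$ and $|\partial_\theta^2 w|\le r^2 |D^2 w| + r|Dw|$, which gives \eqref{beta_rate2}.

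The main obstacle is turning the harmonic profile $r^{-\beta}\cos(\beta\theta)$ into a genuine $L$-super-solution: because we only know $a_{ij}\to \delta_{ij}$ without any explicit rate, a naive perturbative argument fails, and it is the margin $\pi/(2h) - \beta > 0$ between the desired decay exponent and the first eigenvalue that enables the modified barrier $\Phi_\beta = r^{-\beta} f(\theta)$ (with $f'' + \beta^2 f = -1$) to absorb the error. A subsidiary point is the need to know $w\to 0$ at infinity in order to close the maximum-principle argument at the outer boundary; this can be obtained either from a Phragmen-Lindelof-type argument on the logarithmic strip $\{(\log r,\theta):\log r\ge \log A,\ |\theta|<h\}$ or by a first, crude application of the same barrier method with a sacrificially small $\beta$.
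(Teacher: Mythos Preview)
Your outline follows the same architecture as the paper: show $w$ solves a linear elliptic equation whose coefficients approach those of $\Delta$, build a barrier of the form $r^{-\beta}\times(\text{positive angular profile})$, apply the maximum principle, then rescale and invoke Schauder for the derivative bounds. Two points in your write-up are not quite right, and it is instructive to see how the paper handles them.

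First, your divergence-form equation forces you to estimate $\partial_k a_{ij}$, which involves $D^2u$; your claimed bound $\partial_k a_{ij}=O(r^{-2})$ is not available at this stage, since the hypotheses give only $|Du|\to 0$. (Rescaled interior estimates for $u$ alone do give $|D^2u|=o(r^{-1})$, which would in fact suffice for your barrier inequality, but you have not said this.) The paper sidesteps the issue entirely by writing the equation in the specific non-divergence form $a_{ij}w_{ij}+b_kw_k=0$ in which the $a_{ij}$ depend only on $Du$ and the $b_k$ depend only on $D^2v$ and first derivatives; since $|D^2v|=O(r^{-2})$ explicitly, the needed coefficient bounds are immediate and no a~priori control of $D^2u$ is required.

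Second, your maximum-principle step on the unbounded region is incomplete, as you acknowledge. Your suggestion of a ``sacrificially small $\beta$'' does not help: the obstruction---that $w$ is only known to be bounded, not to vanish at infinity---is the same for every $\beta>0$. The paper carries out the Phragm\'en--Lindel\"of argument explicitly: with $0<\beta<\alpha<\pi/(2h)$ it takes barrier $f=r^{-\beta}\cos(\alpha\theta)$ together with the \emph{growing} companion $g=r^{\beta}\cos(\alpha\theta)$, checks that $Lf<0$ and $Lg<0$ for large $r$, and applies the maximum principle to $w-\lambda f-\epsilon g$ (which is negative near infinity because $g\to\infty$ and $w$ is bounded), then lets $\epsilon\downarrow 0$. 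This is exactly the log-strip Phragm\'en--Lindel\"of you allude to, made concrete. Your alternative barrier $r^{-\beta}f(\theta)$ with $f''+\beta^2f=-1$ would work equally well once paired with such a growing companion.
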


The proof of Proposition~\ref{radialdecay} involves a
 Phragm\'{e}n-Lindel\"{o}f-type argument. To apply it, we first show that $w$ is the solution
 of
 a linear elliptic equation.

\begin{lemma}\label{Difference}
The vertical distance function between two minimal graphs
satisfies a linear elliptic equation
\end{lemma}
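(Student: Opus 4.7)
\medskip

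The plan is to write the minimal surface equation in divergence form and subtract, using the fundamental theorem of calculus to express the difference as a linear operator applied to $w = u-v$.

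Recall that a graph $z = u(x,y)$ is minimal if and only if
\begin{equation*}
    \operatorname{div}\!\left(\frac{Du}{\sqrt{1+|Du|^2}}\right)=0,
\end{equation*}
that is, $\operatorname{div}(F(Du)) = 0$ where $F(p) = p/\sqrt{1+|p|^2}$ is the gradient of the convex function $g(p) = \sqrt{1+|p|^2}$. Since both $u$ and $v$ satisfy this equation, subtraction gives
\begin{equation*}
    \operatorname{div}\bigl(F(Du) - F(Dv)\bigr) = 0.
\end{equation*}

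The next step is to recognize $F(Du)-F(Dv)$ as a linear function of $Dw$. By the fundamental theorem of calculus applied to $t\mapsto F(Dv + tDw)$,
\begin{equation*}
    F(Du)-F(Dv) = \int_0^1 DF\bigl(Dv + t\,Dw\bigr)\,dt \;\cdot\; Dw = A(x)\,Dw,
\end{equation*}
where the matrix $A(x) = \int_0^1 D^2 g(Dv(x) + t\,Dw(x))\,dt$ has entries that are smooth functions of $x$ depending on $Du$ and $Dv$. Substituting back,
\begin{equation*}
    \operatorname{div}\bigl(A(x)\,Dw\bigr) = 0,
\end{equation*}
a linear equation in divergence form.

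It remains to verify ellipticity, i.e.\ that $A(x)$ is uniformly positive definite on compact subsets. Since $g$ is strictly convex, $D^2 g(p)$ is positive definite for every $p\in\RR^2$, with eigenvalues bounded below by $(1+|p|^2)^{-3/2}$. Integrating in $t$ preserves positive definiteness, so $A(x)$ is positive definite wherever $Du$ and $Dv$ are bounded; in particular $w$ solves a linear, locally uniformly elliptic equation in divergence form. I expect no genuine obstacle here — the argument is the standard linearization used for minimal graphs — so the only point to state carefully is the explicit formula for $A$ and the observation that its ellipticity constants depend only on bounds for $|Du|$ and $|Dv|$, which will matter in the Phragm\'en--Lindel\"of argument of Proposition~\ref{radialdecay}.
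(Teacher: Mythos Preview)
Your proof is correct but takes a genuinely different route from the paper's. The paper works with the \emph{non-divergence} form of the minimal surface equation,
\[
   Q(u) = (1+u_2^2)u_{11} + (1+u_1^2)u_{22} - 2u_1u_2\,u_{12} = 0,
\]
subtracts $Qu-Qv$ and rearranges algebraically to obtain an operator $Lw = a_{ij}w_{ij} + b_k w_k$ with completely explicit coefficients (e.g.\ $a_{11}=1+u_2^2$, $b_1 = v_{22}(u_1+v_1) - v_{12}(u_2+v_2)$, etc.). You instead use the divergence form and the integral mean-value trick $F(Du)-F(Dv)=\bigl(\int_0^1 DF(Dv+tDw)\,dt\bigr)Dw$, arriving at a divergence-form equation $\operatorname{div}(A\,Dw)=0$.

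Your argument is slicker and makes ellipticity transparent via the strict convexity of $p\mapsto\sqrt{1+|p|^2}$. The paper's approach, however, buys something it actually uses downstream: in the proof of Proposition~\ref{radialdecay} the operator $L$ is applied to the barrier functions $f=r^{-\beta}\cos(\alpha\theta)$ and $g=r^{\beta}\cos(\alpha\theta)$, and having the coefficients $a_{ij},b_k$ written out explicitly in terms of $u_i,v_i,v_{ij}$ makes the estimate $Lf = Cr^{-2}f + o(r^{-2})f$ a direct computation. With your $A(x)=\int_0^1 D^2g(Dv+tDw)\,dt$, you would need to differentiate under the integral to extract the first-order terms $(\partial_i A_{ij})w_j$, so if you follow this line you should either carry out that expansion or note that your $A$ has the same asymptotics ($A\to I$, $|DA|=O(r^{-2})$) as the paper's coefficients.
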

This is a special case of a well known result for quasilinear,
elliptic partial differential equations. (See Gilbarg-Trudinger,
\cite{GT}, Chapter 10.) For the reader's convenience we include a
proof of the lemma.
\begin{proof}[Proof of Lemma~\ref{Difference}]

A function $u$ whose graph is a minimal surface satisfies
$$Q(u)=(1+u_2^2)u_{11}+(1+u_1^2)u_{22}-(2u_1u_2)u_{12}=0.$$
 If $v$ is another function whose graph
is a minimal surface, then writing $w=u-v$,
\begin{align*}\label{QQ}
0&=Qu-Qv\\
&= (1+u_2^2)w_{11}+(1+u_1^2)w_{22}-(2u_1u_2)w_{12}  \\
      &\qquad +v_{11}(u_2^2-v_2^2)+v_{22}(u_1^2-v_1^2)-2v_{12}(u_1u_2-v_1v_2)   \\
&=
a_{ij}w_{ij}+[v_{11}(u_2+v_2)]w_2+[v_{22}(u_1+v_1)]w_1 \\
     &\qquad -v_{12}((u_2+v_2)w_1+(v_1+u_1))w_2
     \\
&= a_{ij}w_{i,j}
     +[v_{22}(u_1+v_1)-v_{12}(u_2+v_2)]w_1+[v_{11}(u_2+v_2)-v_{12}(u_1+v_1)]w_2  \\
&= a_{ij}w_{ij} +b_kw_k,
\end{align*}
 where
 \begin{equation}\label{coefficients}
 \begin{aligned}
 a_{11} &=1+u_2^2,\,\,\,a_{22}=1+u_1^2,\,\,\,a_{12}=a_{21}=u_1u_2\\
 b_1 &=v_{22}(u_1+v_1)-v_{12}(u_2+v_2)\\
 b_2 &=v_{11}(u_2+v_2)-v_{12}(u_1+v_1).
 \end{aligned}.
 \end{equation}
The operator $L$ defined by
\begin{equation}
Lw:= a_{ij}w_{ij} +b_k w_k=0. \label{Lwequals0}
 \end{equation}
is   elliptic and  linear (its coefficients do not depend
on $w$ or its derivatives), and $Lw=0$.
\end{proof}

Suppose $u_n$ and $v_n$ are sequences of solutions to the minimal surface equation on a domain $\Omega$.  Then $w_n =u_n-v_n$ satisfies $L_n w_n=0$, where  $L_n=L(u_n,v_n)$ is the
linear elliptic operator defined  in Lemma~\ref{Difference}. We will have need of the following result in the proof of Proposition~\ref{radialdecay}.
\begin{corollary}\label{limitL}
 If $u_n$ and $v_n$ converge uniformly to zero, then   $L_n$ converges
 smoothly to the Laplacian on compact subsets of  $\Omega$.
 \end{corollary}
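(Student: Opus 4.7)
The plan is to upgrade the hypothesis $u_n, v_n \to 0$ uniformly to $C^\infty_{\mathrm{loc}}$ convergence; once this is in hand, the conclusion is immediate by inspection of the formulas~\eqref{coefficients}.

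First I would invoke the classical interior gradient estimate for the minimal surface equation (e.g.\ Bombieri--De Giorgi--Miranda): on any $\Omega' \Subset \Omega$, $\sup_{\Omega'} |Du_n|$ is controlled by $\sup_\Omega |u_n|$, which tends to $0$ by hypothesis. With a uniform gradient bound in hand, the minimal surface operator $Q$ becomes uniformly elliptic with coefficients that are smooth functions of $Du_n$, so a standard Schauder bootstrap yields uniform $C^{k,\alpha}$ bounds on any $\Omega'' \Subset \Omega'$ for every $k$; the same applies to the $v_n$.

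By Arzel\`a--Ascoli together with the given $C^0$ limit and uniqueness of the limit, we conclude $u_n \to 0$ and $v_n \to 0$ in $C^k_{\mathrm{loc}}(\Omega)$ for every $k$. Substituting into~\eqref{coefficients}, the leading coefficients $a_{11} = 1 + u_2^2$ and $a_{22} = 1 + u_1^2$ converge to $1$, while $a_{12} = u_1 u_2$ converges to $0$; the lower-order coefficients $b_1, b_2$ are polynomial combinations of first and second partial derivatives of $u_n$ and $v_n$ (all of which tend to $0$ smoothly) and therefore also tend to $0$, in $C^k_{\mathrm{loc}}$ for every $k$. Hence $L_n \to \Delta$ smoothly on compact subsets of $\Omega$.

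The only step with any real content is the promotion from $C^0$ to $C^1$ convergence for solutions of the minimal surface equation; once that is available, the remainder is a routine elliptic bootstrap followed by direct substitution into the formulas for the coefficients of $L_n$.
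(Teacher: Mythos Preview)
Your proof is correct and follows essentially the same approach as the paper: upgrade the uniform convergence $u_n,v_n\to 0$ to $C^\infty_{\mathrm{loc}}$ convergence via interior elliptic estimates (the paper cites \cite{GT}, Corollary~16.7, while you invoke the interior gradient estimate plus Schauder bootstrap), then read off the conclusion directly from~\eqref{coefficients}. The paper packages the compactness step as a subsequence/limsup argument rather than an explicit gradient-then-Schauder chain, but the substance is the same.
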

\begin{proof} Let $f_i:\Omega \rightarrow {\bf R}$ be a sequence of solutions to the minimal surface equation that converge uniformly to zero, and let $K$ be a compact subset of $\Omega$. Fix a positive integer $k$ and  let 
$$\Lambda=\limsup _i(\max_{x\in K} |D^kf_i(x)|).$$
By passing to a subsequence if necessary we may assume that the $\limsup$ is a limit.
By \cite{GT}, Corollary 16.7, $||f_i||_{C^{k+1}}$ is uniformly bounded  on compact subsets of $\Omega$. Thus, by passing to a further subsequence if necessary, we may assume that the $f_i$ converge, on compact subsets in the $C^k$ norm, to a limit function $f$. But since the $f_i$ converge uniformly to zero, $f$ is the zero function. Hence $\Lambda =0$.
 
 We now apply the conclusion of the previous paragraph to the sequences $u_n$ and $v_n$. 
From \eqref{coefficients}, it follows that 
$L_n$ converges smoothly on compact subsets to the Laplacian. 
\end{proof}

\begin{proof}[Proof of Proposition~\ref{radialdecay}]
Let $w=u-v$ be the difference between the two functions that
define the minimal multigraphs on $W_A$.  The function $w$ is zero on the
rays in $\partial W_A$ and
  bounded   above  on  the circular arc of radius $A$ in $\partial
 W_A$. By Lemma~\ref{Difference}, $w$ satisfies $Lw=0$ for the linear elliptic operator
 $L$ defined in (\ref{Lwequals0}).

For any  $0<\beta<\alpha<\frac{\pi}{2h}$,
 the functions
 \begin{align*}
           f(r,\theta)      &=        r^{-\beta} \cos ( \alpha  \theta )   \\
           {g}(r,\theta)  &=        r^{\beta} \cos( \alpha \theta )
  \end{align*}
are both positive on $\overline{W_{A}}$, and they  satisfy
\begin{equation}\label{Deltafg}
 \begin{aligned}
          \Delta f    &=   (\beta ^2-\alpha ^2)r^{-2}f  <   0,
                     \\
           \Delta g  &=   (\beta ^2-\alpha ^2)r^{-2}g  <  0
\end{aligned}
\end{equation}
on $W_A$.

\vspace{.2in}

\begin{claim}
There exists an $A'\geq A$, such that
$$Lf<0\,\, \mbox{and}\,\, L{g}<0 \,\,\,\mbox{on}
\,\,\,W_{A'} =\{(r,\theta)| r\geq A', \,\, |\theta|<h\}.$$
\end{claim}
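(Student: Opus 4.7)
The plan is to treat $L$ as a perturbation of the Cartesian Laplacian $\Delta = \partial_{11} + \partial_{22}$. Equation \eqref{Deltafg} already exhibits $\Delta f$ and $\Delta g$ as strictly negative of sizes $r^{-\beta-2}$ and $r^{\beta-2}$ respectively, with a uniform positive coefficient: since $|\alpha\theta| \le \alpha h < \pi/2$ throughout $W_A$, one has $\cos(\alpha\theta) \ge \cos(\alpha h) > 0$, so there is a constant $c > 0$ with
\begin{equation*}
 \Delta f \le -c\, r^{-\beta-2} \qquad \text{and} \qquad \Delta g \le -c\, r^{\beta-2}
\end{equation*}
on $W_A$. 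Thus it suffices to show that $(L-\Delta)f$ and $(L-\Delta)g$ are of strictly smaller order than these bounds as $r\to\infty$.

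To carry this out I would first record decay estimates for the coefficients of $L$ via the formulas \eqref{coefficients}. Since $v(r,\theta)=\theta$ is (a branch of) the polar-angle function in Cartesian coordinates, direct computation gives $|Dv| = O(r^{-1})$ and $|D^2v| = O(r^{-2})$. The hypothesis that $S'$ has asymptotically vertical normals forces $Du\to 0$ as $r\to\infty$, so from $a_{11}-1 = u_2^2$, $a_{22}-1 = u_1^2$, $a_{12}=u_1u_2$ we obtain $a_{ij}-\delta_{ij}=o(1)$. For the first-order coefficients, $|u_i+v_i| \le |u_i| + |v_i| = o(1) + O(r^{-1}) = o(1)$ and $|v_{jk}| = O(r^{-2})$, so $|b_k| = o(r^{-2})$.

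Next, $f=r^{-\beta}\cos(\alpha\theta)$ and $g=r^{\beta}\cos(\alpha\theta)$ are respectively $r^{-\beta}$ and $r^{\beta}$ times a smooth, bounded function of $\theta$ on the closed sector $|\theta|\le h$; a standard homogeneity/scaling argument therefore yields $|Df|=O(r^{-\beta-1})$, $|D^2 f|=O(r^{-\beta-2})$ and $|Dg|=O(r^{\beta-1})$, $|D^2 g|=O(r^{\beta-2})$. Combining these with the coefficient bounds,
\begin{equation*}
 |(a_{ij}-\delta_{ij})f_{ij}| = o(1)\cdot O(r^{-\beta-2}) = o(r^{-\beta-2}),
 \qquad
 |b_k f_k| = o(r^{-2})\cdot O(r^{-\beta-1}) = o(r^{-\beta-3}),
\end{equation*}
and the analogous estimates hold for $g$ with $\beta-2$ and $\beta-3$ in place of $-\beta-2$ and $-\beta-3$. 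Both perturbation terms are therefore dominated by the strictly negative leading term of $\Delta f$ (respectively $\Delta g$), so there exists $A'\ge A$ with $Lf<0$ and $Lg<0$ throughout $W_{A'}$.

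I do not expect a serious obstacle here: the argument is a routine perturbation-of-the-Laplacian computation, and the only delicate point is careful bookkeeping of the exponents so that the negative contribution $\Delta f$ or $\Delta g$ dominates the perturbation uniformly in $\theta\in[-h,h]$. The only input that really requires care is the conversion between the polar coordinates in which $f,g,v$ have simple closed forms and the Cartesian derivatives that appear in the coefficients of $L$, but since $r\ge A$ this conversion is routine.
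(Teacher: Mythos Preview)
Your proposal is correct and follows essentially the same approach as the paper: treat $L$ as a perturbation of the Laplacian, use the explicit computation $\Delta f = (\beta^2-\alpha^2)r^{-2}f$, bound the perturbation coefficients via $|Du|=o(1)$, $|Dv|=O(r^{-1})$, $|D^2v|=O(r^{-2})$, and check that the resulting error terms are of strictly smaller order than the main negative term. The only cosmetic difference is that the paper organizes the estimate by factoring out $f$ (writing $|Df|=O(r^{-1})f$, $|D^2f|=O(r^{-2})f$) rather than tracking the absolute orders $r^{-\beta-1}$, $r^{-\beta-2}$ as you do, but the arithmetic is the same.
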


\begin{proof}[Proof of Claim:]
 On $S$, the half helicoid  that is the
multigraph of $v(r,\theta) =\theta$, the normal is asymptotically vertical
as $r\rightarrow \infty$, and by assumption the same is true for
$S'$, the graph of $u(r,\theta)$. Therefore,  $|D u|\rightarrow
0$ and $|D v|\rightarrow 0$ as $r\rightarrow\infty$. Moreover
it is elementary to calculate that $|Dv|=O(r^{-1})$ and $|D^2
v|=O(r^{-2})$.

From \eqref{coefficients}, \eqref{Deltafg}, and the preceding
paragraph, we may compute
\begin{eqnarray} \label{Lf1} Lf &=&\Delta f +u_2^2
 f_{1,1} +u_1^2 f_{2,2}-2u_1 u_2 f_{1,2}+b_1f_1+b_2 f_2 \\&=&
Cr^{-2}f+o(1)(f_{1,1} + f_{2,2} -2f_{1,2}) +O(r^{-2})(f_1
+f_2),\nonumber
\end{eqnarray}
where $C=(\beta ^2 -\alpha ^2)<0$, and the $b_k$ are defined in
\eqref{coefficients}.  It is straightforward to compute that
$|Df|=O(r^{-1})f$ and $|D^{2}f|=O(r^{-2}) f$,
 from which it follows that
\begin{eqnarray}\label{Lf3}
Lf &=& Cr^{-2}f+o(1)O(r^{-2})f +O(r^{-3})f,\nonumber\\
&=& f[Cr^{-2} +o(1)O(r^{-2})+O(r^{-3})].
\end{eqnarray}
Since $f>0$ on $W_A$ and $C<0$, it follows from \eqref{Lf3} that
$Lf<0$ for $r$ sufficiently large.

An almost identical proof establishes that $Lg<0$ for $r$
sufficiently large.

\end{proof}

\vspace{.2in}
 Let $A'$ be the constant whose existence is
established by  Claim~1.  Since $w$ is bounded on
$\overline{W_{A'}}$ and $f$ is strictly positive on
$\overline{W_{A'}}$, there exists a $\lambda>0$ large enough so
that $\lambda f > w$ on the circular arc in $\partial W_{A'}$
of radius $A'$. On the rays in $\partial W_{A'}$, $w$ is
identically zero and both $f$ and $g$ are strictly positive. Therefore, for any
$\epsilon>0$, 
$$\tilde{w}_\epsilon :=w-\lambda f-\epsilon g< 0\,\,\,\,\mbox{on}\,\,\,\partial W_{A'}.$$

Also, for any $\epsilon >0$, $\tilde{w}_\epsilon (r,\theta)$ is negative for $r$ sufficiently large. (This is because $w$ is bounded,  $f\rightarrow 0$ and $g\rightarrow \infty$ as $r\rightarrow \infty$.)
 From  Claim~1 we have
\begin{equation} L\tilde{w}_\epsilon>0 \,\,\,\mbox{on}\,\,\,W_{A'}. \nonumber
\end{equation} 
By the maximum principle, we may conclude that $\tilde{w}_\epsilon < 0 $ on 
$W_{A'}
$. Since this is valid for any positive $\epsilon$, it follows that $w\leq\lambda f$ on 
$W_{A'}$, 
which implies that for any $\beta <\frac{\pi}{2h}$, $$w\leq \lambda r^{-\beta}.$$
 We can repeat the same argument
for $-w$ and conclude that  $-w\leq \lambda r^{-\beta}$.  We
have proved  the first equation of (\ref{beta_rate})
 for any $\beta <\frac{\pi}{2h}$. In particular, $S$ is
asymptotic to $S'$.

It remains to establish the asymptotic decay rates, stated in
\eqref{beta_rate}, of $Dw$ and $D^2w$. If a surface is minimal,
then its image under rescaling of $\RR ^3$ is also minimal.
Therefore \begin{equation} u_R(p):= \frac{u(Rp)}{R}
 \,\,\,\mbox{and} \,\,\, v_R(p):= \frac{v(Rp)}{R} \label{uRandvR}
 \end{equation} 
are solutions of 
 minimal surface equation defined for $r>A/R$ and $|\theta|\le h$.
 By Schwartz reflection in the boundary rays, we may extend $u_R$ and $v_R$
 to the region defined by $r>A/R$ and $|\theta|\le 2h$.
 Moreover, $u_R\rightarrow 0$ and $v_R\rightarrow 0$ as $R\rightarrow \infty$. By  Lemma~\ref{Difference} and Corollary~\ref{limitL}, $w_R=u_R-v_R$ satisfies a linear elliptic equation, 
 $L_Rw_R=0$, and as $R\rightarrow \infty$, $L_R$ converges smoothly to the Laplacian on compact subsets of the region where $r>0$ and $|\theta|<2h$.

Let $\Gamma$ be the arc given by $r=1$ and $|\theta|\le h$.  
Let $\Omega$ be the open set defined by $1/2 < r<2$ and $|\theta| \le \frac32 h$.
  We
will apply the Schauder interior estimates (\cite{GT}, Theorem
6.2) for the operator $L_R$ on $\Omega$. Since $L_R$ is converging
smoothly on $\Omega$, there exist  positive
constants $C$ and $R^{*}$ such that for $R>R^{*}$:
\begin{equation}\label{SchauderEst}
\begin{aligned}
 \sup_{p\in\Gamma}|Dw_R(p)|   &\le C \sup_{p\in\Omega}|w_R(p)|  \\
\sup_{p\in\Gamma}|D^2w_R(p)|    &\le   C \sup_{p\in\Omega}|w_R(p)|.
\end{aligned}
\end{equation}

Since $w_R(p)=\frac{w(Rp)}{R}$,
the first and second derivatives of $w_R$ satisfy
 \begin{equation}\label{scaling}
 \begin{aligned}
 Dw_R (p)  &=      Dw (Rp)   \\
 D^2w_R(p)   &= RD^2w(Rp).
\end{aligned}
\end{equation}

 Let 
 $R\Gamma := \{ Rp: p\in \Gamma\}$ and
 $R\Omega:=\{Rp\,\,|\,\, p\in\Omega\}$. It now follows from
\eqref{SchauderEst} and  \eqref{scaling} that for $R>R^{*}$:
\begin{equation}\label{SchauderEst2}
\begin{aligned}
     \sup_{q\in R\Omega}|Dw(q)|  &\le  R^{-1}C  \sup_{q\in R\Omega}|w(q)|\\
     R\sup_{q\in R\Omega}|D^2w(q)|&\le R^{-1}C  \sup_{q\in R\Omega}|w(q)|.
\end{aligned}
\end{equation}
 Since we have already established that $w =o(r^{-\beta})$, the last two
estimates of \eqref{beta_rate} follow immediately from
\eqref{SchauderEst2}.  The bounds~ \eqref{beta_rate2} follow directly from \eqref{beta_rate}. 
\end{proof}

We will use  Proposition~\ref{radialdecay} to prove this section's main result, 
which concerns  the asymptotic behavior of ends of  
embedded screw-motion-invariant minimal surfaces with some of the properties of the surfaces constructed in \cite{hoffwhite1}.

\begin{proposition}\label{LastProposition}
Let $S\subset \RR^3$ be a properly embedded minimal surface that is invariant
under a screw motion
\begin{equation}\label{screwyou}
   \sigma: (r\cos\theta, r\sin\theta, \theta) \mapsto (r\cos(\theta+\beta), r\sin(\theta+\beta), \theta+t)
\end{equation}
with $t\ne 0$.
Suppose that 
  the slope of the tangent plane tends to $0$ as $r=\sqrt{x^2+y^2}$ tends to infinity,
  that the intersection of $S$ with some horizontal plane coincides with a line $L$ outside
  of a compact set, and that
  the two ends of the line $L$ correspond to different ends of $S/\sigma$.
  
Then $S$ is asymptotic as $r$ tends to infinity to a helicoid $H'$ with axis $Z$.
Indeed, the vertical distance between $S$ and $H'$ decays faster than
$r^{-\beta}$ for every 
\[
    \beta < \left| \frac{p}{t}\right| \, \pi,
\]
where $p$ is the pitch of the helicoid $H'$.
\end{proposition}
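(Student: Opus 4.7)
The plan is to reduce the claim to Proposition~\ref{radialdecay}. The slope-to-zero hypothesis implies that outside a sufficiently large solid cylinder $C_R=\{r\le R\}$ about $Z$, the surface $S$ is locally a horizontal graph; combined with the $\sigma$-invariance and the hypothesis that the two ends of $L$ lie in different ends of $S/\sigma$, this yields a multigraph description of each end of $S/\sigma$ as the graph of a function $u(r,\theta)$ over a sector of the $(r,\theta)$-plane, with the quasi-periodicity $u(r,\theta+\beta)=u(r,\theta)+t$ coming from the screw-motion invariance.

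The candidate asymptotic helicoid $H'$ must itself be $\sigma$-invariant (by uniqueness of the asymptotic limit together with the $\sigma$-invariance of $S$), which forces $H'$ to have axis $Z$ (the unique vertical axis fixed by $\sigma$) and pitch $p=t/\beta$. Since every horizontal slice of $H'$ is a line through $Z$ and $L$ is itself a straight line, once asymptoticity is in hand the two lines $L$ and $H'\cap\{z=z_0\}$ must coincide, so $L$ extended passes through $(0,0,z_0)$; this fixes the vertical shift of $H'$. Letting $\phi$ be the angular direction of $L$, one sheet of $S\setminus C_R$ then becomes the graph of $u(r,\theta)$ over the angular slab $\{r>R,\phi<\theta<\phi+\beta\}$, with boundary values $u(r,\phi)=z_0$ and $u(r,\phi+\beta)=z_0+t$ matching the candidate half-helicoid $v(r,\theta)=z_0+(t/\beta)(\theta-\phi)$.

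To apply Proposition~\ref{radialdecay} I need the boundedness hypothesis on $w=u-v$. This comes from the maximum principle: the $z$-coordinate is harmonic on $S$, and on the region $S\cap\{z_0\le z\le z_0+t\}$ its boundary (outside a compact set) is the pair of lines $L\cup\sigma(L)$ on which $z$ equals $z_0$ and $z_0+t$ respectively, so $z\in[z_0,z_0+t]$ on the whole region and hence $|w|\le t$ on the sector (up to bounded compact contributions from the inner circle $r=R$). Rescaling $\RR^3$ by the factor $\beta/t$ sends $v$ to the standard half-helicoid $\tilde\theta$ over $|\tilde\theta|<\beta/2$, so Proposition~\ref{radialdecay} yields $|\tilde w|=o(\tilde r^{-\gamma})$ for every $\gamma<\pi/\beta$, and unwinding the rescaling gives $|w|=o(r^{-\gamma})$ in the original coordinates for every $\gamma<\pi/\beta=|p/t|\pi$. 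Applying the same argument to the other end of $S/\sigma$ completes the proof.

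The main obstacle is the logical circularity in the second paragraph: establishing that $L$ passes through $(0,0,z_0)$ seems to require already knowing $S$ is asymptotic to a helicoid with axis $Z$, but that asymptoticity is what we want to extract from Proposition~\ref{radialdecay}. I expect this to be resolvable by a preliminary bootstrap: the boundedness of $w$ for an arbitrary $\sigma$-invariant helicoid of pitch $t/\beta$ (not yet pinned down in vertical shift) combined with the $\sigma$-orbit of $L$ producing infinitely many horizontal lines in $S$ at heights $z_0+nt$ forces these lines to project to a family of parallel lines at uniform distance from $Z$, and the bounded-distance estimate from the maximum principle combined with the $\sigma$-periodicity then forces that common distance to be zero. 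Once this is in place, the cleaner statement from Proposition~\ref{radialdecay} gives the sharp decay rate $|p/t|\pi$.
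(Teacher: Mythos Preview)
Your overall plan---parametrize each end of $S\setminus C_R$ as a multigraph and feed it into Proposition~\ref{radialdecay}---is exactly the paper's. But there is a genuine gap in the execution. The quasi-periodicity $u(r,\theta+\beta)=u(r,\theta)+t$ that you assert does not follow from $\sigma$-invariance: the rotation angle $\beta$ of $\sigma$ is only determined modulo $2\pi$, whereas the angular coordinate $\theta$ of the multigraph lives in $\RR$. What $\sigma$-invariance actually gives is $u(r,\theta+\Theta)=u(r,\theta)+t$ for some $\Theta\equiv\beta\pmod{2\pi}$, and it is $\Theta$, not $\beta$, that fixes both the pitch $p=t/\Theta$ of the limiting helicoid and the sector width in Proposition~\ref{radialdecay}. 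Your identification $p=t/\beta$ and your sector $\{\phi<\theta<\phi+\beta\}$ are therefore unjustified. The paper pins $\Theta$ down by an argument you omit entirely: after normalizing $L=X$, Schwarz reflection in $X^+$ gives the odd symmetry $f(r,-\theta)=-f(r,\theta)$, and the hypothesis that the two ends of $L$ lie in different ends of $S/\sigma$ (which you invoke only vaguely) is used to show that the component $V$ containing $X^+$ is disjoint from every $\sigma^nX^-$. Together these force $f$ to be non-periodic with $f=0$ only at $\theta=0$, and then $V\cap\{z=t\}$ is identified with the single ray $\sigma X^+$, so that $f=t$ precisely at one value $\theta=\Theta$. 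Only now does one get $0<f<t$ on $0<\theta<\Theta$; your maximum-principle argument for boundedness is tautological, since you defined the region by $z_0\le z\le z_0+t$---what is actually needed is to identify that region with the angular sector of the multigraph.

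As for the circularity you flag about $L$ passing through $Z$: the paper simply normalizes $L=X$ by translation and rotation and proceeds without further comment. Your proposed bootstrap does not work as stated (the horizontal projections of the lines $\sigma^n L$ are all at the same distance from $Z$ but at different angles, not parallel), so that sketch would need to be replaced rather than filled in.
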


The pitch of $H'$ may be defined as the (constant) value that 
  ${\partial z}/{\partial \theta}$ 
takes on $H'\setminus Z$.
Note that if $\beta=t=2h$, then $\sigma$ is the screw motion
$\sigma_{2h}$ used elsewhere in this paper~\eqref{screw}.

\begin{remark}
The theorem is also true for surfaces $S$ with boundary, provided the boundary lies
within a bounded distance of $Z$.
\end{remark}

\begin{proof}
By translation and rotation, we may assume that the line $L$ is $X$.
We will allow $S$ to have boundary as indicated in the remark.
Let $C$ be the interior of a closed solid cylinder about $Z$ that contains
$\partial S$.
Choose the radius $R$ of the cylinder large enough that
 the tangent planes to $S\setminus C$
are all nearly horizontal, and 
 so that $S\cap \{z=0\}$ coincides with $X$ outside of $C$.
 We may assume that $S\subset \RR^3\setminus C$ and that
$\partial S\subset  \partial C$; otherwise replace $S$ by $S\setminus C$.

Let $X^+$ and $X^-$ denote the positive and negative portions of $X\setminus C$:
\begin{align*}
  X^+ &= \{ (x,0,0): x \ge R\}  \\
  X^- &= \{ (x,0,0): x \le -R\}.
\end{align*}
Thus we have
\begin{equation}\label{ZeroSlice}
      S\cap \{z=0\}  =  X^+ \cup X^-.
\end{equation}

Note that each component $V$ of $S$ is a covering space of
$\RR^2\setminus \BB(0,R)$.   
Since $V$ is embedded, it must be either a single-sheeted covering or an infinite
covering.  Thus $V$ can be parametrized as
\begin{equation}\label{parametrization}
   (r,\theta) \mapsto ( r\cos\theta, r\sin\theta, f(r,\theta)) \qquad (r\ge R,\, \theta\in\RR)
\end{equation}
where this map is either periodic (with period $2\pi$) or else one-to-one, according to whether
$V$ is a single-sheeted or not.
Note that if $f$ is not periodic, then by properness $f$ is not bounded above
or below.

Now let $V$ be the component of $S$ containing $X^+$.  
By the hypothesis about the ends of $L$, $V$ cannot be the component of $S$ that contains
$X^-$, nor can it be the component that contains
any of the rays identified with $X^-$ in $S/\sigma$.  Thus
\begin{equation}\label{WrongRays}
    \text{$V\cap  \sigma^n X^- = \emptyset$ for $n\in \ZZ$.}
\end{equation}
Note that we can parametrize $V$
as in~\eqref{parametrization} with a function $f$ satisfying
\[
    f(r, 0) \equiv 0.
\]
%
%
%
%
%
%
%
%
%
%
By Schwartz reflection, $f(r,-\theta)\equiv - f(r,\theta)$.  In particular,
$f(r,-\pi) = - f(r,\pi)$.  Now $f(r,\pi)\ne 0$ since $V$ is disjoint from $X^-$.
Thus $f(r,\pi)\ne f(r,-\pi)$, so $f(r,\theta)$ does not have period $2\pi$
and therefore is  not periodic.

It follows (from~\eqref{ZeroSlice}) that $f(r,\theta)=0$
if and only if $\theta=0$.

%
%
%
%
%
%
%
%
%
%
%
   
 By reflecting in the plane $\{z=0\}$, if necessary, we may assume that
$f(r,\theta)>0$ for $\theta>0$.  We may also assume that  $t>0$.  (Otherwise
replace $\sigma$ by $\sigma^{-1}$.)
Since $f$ is not bounded above, it must intersect the plane $\{z=t\}$.
 By~\eqref{ZeroSlice} and by the $\sigma$-invariance,
 \[
     S\cap \{z=t\} = \sigma X^+ \cup \sigma X^-.
 \]
 Thus $V\cap \{z=t\}$ must be one or both of the rays $\sigma X^{\pm}$.  
 By~\eqref{WrongRays}, it must be the ray $\sigma X^+$.
 Let $\Theta$ be the value of the parameter $\theta$ corresponding to this ray.
 Then we have:
\begin{align*}
    &0< f(r,\theta) < t  \qquad \text{for $0<\theta<\Theta$},  \\
    &\text{$f(r,0) \equiv 0$, and $f(r, \Theta) \equiv t$.}
\end{align*}
If we dilate the surface $S$ by $\lambda>0$, then $R$, $t$, and $f(r,\theta)$
get replaced by $\lambda R$, $\lambda t$, and $f(r/\lambda,\theta)$, but $\Theta$ does not change.

Thus by scaling by $\lambda = \Theta/t$,  we can assume that $t=\Theta$.
Hence $\sigma=\sigma_{2h}$ (see~\eqref{screw}), where $h=t/2$.
Now $V$ (or more precisely $\sigma_{-h}V$) satisfies the
  hypotheses of Proposition~\ref{radialdecay}.
 Hence $V$ is asymptotic to $H$ with the asserted decay rate.
 
Let $W$ be the component of $S$ containing $X^-$.  Exactly the same argument shows that $W$
is also asymptotic (with the asserted decay rate) to 
some helicoid $H'$ with axis $Z$ and containing
 $X$.
Note that $H'=H$ since otherwise $V$ and $W$ would intersect.

Finally, $S$ can have no component
other than $V$ and $W$, because any such component, being
trapped between $V$ and $W$, would have to intersect the plane $\{z=0\}$,
and by~\eqref{ZeroSlice} the only possible intersections are $X^+$ and $X^-$.
\end{proof}

\nocite{MR01}\nocite{ColdingMInicozzi3} \nocite{ColdingMinicozziPNAS}
\bibliography{genusone}

\bibliographystyle{alpha}
\end{document}